\numberwithin{equation}{section}
\def\beq{\begin{equation}}
\def\eeq{\end{equation}}
\def\beqn{\begin{equation*}}
\def\eeqn{\end{equation*}}
\newtheorem{thm}{Theorem}[section]
\newtheorem{lem}[thm]{Lemma}
\newtheorem{prop}[thm]{Proposition}
\newtheorem{cor}[thm]{Corollary}
\newtheorem{Def}{Definition}[section]
\def\Fg{\mathfrak{g}}
\def\Fsl{\mathfrak{sl}}
\def\Fsp{\mathfrak{sp}}
\def\Fh{\mathfrak{h}}
\def\Fn{\mathfrak{n}}
\def\CC{\mathbb{C}}
\def\S{\mathcal{S}}
\def\Hom{\mathrm{Hom}}
\def\Der{\mathrm{Der}}
\def\IDer{\mathrm{IDer}}
\def\T{\mathrm{T}}
\def\wt{\mathrm{wt}}
\def\a{\alpha}
\def\g{\gamma}
\def\d{\delta}
\def\la{\lambda}
\def\La{\Lambda}
\def\o{\overline}
\def\<{\left<}
\def\>{\right>}
\def\({\left(}
\def\){\right)}
\def\[{\left[}
\def\]{\right]}
\def\<{\left<}
\def\>{\right>}
\title{Symmetric Biderivations on Complex Semisimple Lie Algebras}
\author{Shiyuan Liu}
\address{Department of Mathematics, Xinyang Normal University, Henan Xinyang, 46400 , China}
\email{liushiyuanxy@163.com}
\author{Dong Liu}
\address{Department of Mathematics, Huzhou
University, Zhejiang Huzhou, 313000, China}
\email{liudong@zjhu.edu.cn}
\author{Yueqiang Zhao}
\address{Department of Mathematics, Xinyang Normal University, Henan Xinyang, 46400 , China}
\email{yueqiangzhao@163.com }
\thanks{This work was supported by the National Natural Science Foundation of China (12071405, 12301040) and Nanhu Scholars Program of XYNU (No. 012111, 2021030)}
\begin{document}

\maketitle

\begin{abstract} 
This paper studies  biderivations on finite-dimensional complex semisimple Lie algebras to their finite-dimensional modules. More precisely, we prove that all such symmetric biderivations are trivial. As applications, we   determine all biderivations on
Takiff algebras and symplectic oscillator algebras, as well as 
all supersymmetric biderivations over some Lie superalgebras including all finite-dimensional classical simple Lie superalgebras.
\end{abstract}

\medskip

\textbf{Key words}: Lie (super)algebra, biderivation.

\medskip

2020 Mathematics Subject Classification: 17B20, 17B40, 17B65.


\section{Introduction}

Derivations and generalized derivations are important in the study of structure of various algebras. Bre$\breve{\mathrm{s}}$ar et al. introduced the notion of biderivation of rings in \cite{B93}, and they showed that all biderivations of noncommutative prime rings are inner.  In \cite{W11}, the biderivations of Lie algebras was introduced and the authors proved that all skew-symmetric biderivations of finite-dimensional simple Lie algebras over an algebraically closed field of characteristic $0$ are inner. In recent years, many scholars gave their attentions to the study of biderivations of many Lie algebras using case by case discussions, see \cite{C16,Ch17,T18,T17,L18,T18(1)}. 
Certainly, some complicated calculations were used in many papers.

\medskip

The skew-symmetric biderivations of Lie algebras and Lie superalgebras have been sufficiently studied. In \cite{B18, TMC}, the authors proved that all (super-)skew-symmetric biderivations on any perfect and centerless Lie (super)algebras are inner (super-)biderivations. With this result, all skew-symmetric biderivations on many Lie (super)algebras can be determined (see \cite{B18} for details).

\medskip

It is well known that any biderivation can be decomposed into a sum of a skew-symmetric biderivation and a symmetric biderivation, and the later can determined commutative post-Lie algebra structures, which are connected with the homology of partition posets and Koszul operads \cite{V}.
However, there is no any sufficient tool to determine symmetric biderivations on some Lie algebras and Lie superalgebras. In \cite{Chen24}, Chen et al. use a conceptual method to determine all symmetric biderivations on finite-dimensional simple Lie algebras. In \cite{W11}, Wang et al. study  symmetric biderivations from the simple Lie algebra $\frak{sl}_2$  to its finite-dimensional irreducible modules by concrete calculations.
In \cite{D23}, Dilxat et al. determine all symmetric biderivations from the Virasoro algebra to its density modules. Based on it,  all symmetric biderivations of some Lie superalgeras related to the Virasoro algebra are easily to determined. 

\medskip

From \cite{Chen24, W11, D23},  we seen that it is important to determine all symmetric biderivations from a finite-dimensinoal semisimple complex Lie algebra to its arbitrary finite-dimensional module.  Inspired by  \cite{Chen24},   
we  proved that  all the symmetric biderivations from a finite-dimensinoal semisimple complex Lie algebra to its arbitrary finite-dimensional module are trivial in this paper (Theorem \ref{T3.1} below), by using some representation theory of semisimple Lie algebras. By this main result, we can easily determine all the symmetric biderivations of reductive Lie algebras, Takiff algebras, the symplectic oscillator algebras,  and the super-symmetric biderivations of certain Lie superalgebras.

\medskip

Throughout this paper,   all vector
spaces and algebras  are over the complex field $\mathbb C$.


\section{Biderivations}
We first recall the notion of derivations.
Let $\Fg$ be a Lie algebra and $V$ be a $\Fg$-module. A linear map $D:\Fg\longrightarrow V$ is called a derivation from $\Fg$ to $V$ if 
\beqn
D([x,y])=xD(y)-yD(x),~\forall x,y\in\Fg.
\eeqn
Note that a derivation from $\Fg$ to its adjoint representation is just an usual derivation of $\Fg$. 
For any $v\in V$, the linear map
\beqn
D_v:\Fg\longrightarrow V,~x\longmapsto xv
\eeqn
is a derivation from $\Fg$ to $V$, which is called  inner. Denote by $\Der(\Fg,V)$ the set of derivations from $\Fg$ to $V$ and $\IDer(\Fg,V)$ the set of inner derivations. Then $\Der(\Fg,V)$ is a linear space and  $\IDer(\Fg,V)$ is a subspace of $\Der(\Fg,V)$. The 1st cohomology group is defined as 
\beqn
H^1(\Fg,V)=\Der(\Fg,V)/\IDer(\Fg,V).
\eeqn
We have the following Whitehead's Lemma
\begin{thm}[{\cite[Proposition 6.1]{H96}}]\label{White}
Let $\Fg$ be a semisimple Lie algebra over a field of characteristic $0$, $V$ a finite dimensional $\Fg$-module. Then $H^1(\Fg,V)=0$. In particular, the derivations from $\Fg$ to $V$ are all inner.
\end{thm}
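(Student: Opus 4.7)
The plan is to follow the classical proof of Whitehead's first lemma, which splits into three standard steps: complete reducibility, a direct argument for the trivial module, and a Casimir argument for nontrivial irreducibles. First I would reduce to the case of irreducible $V$. By Weyl's theorem on complete reducibility, $V = \bigoplus_i V_i$ with each $V_i$ irreducible. Both $\Der(\Fg,-)$ and $\IDer(\Fg,-)$ distribute over finite direct sums---a derivation into a sum splits into its components, and is inner iff each component is---so $H^1(\Fg,V) = \bigoplus_i H^1(\Fg,V_i)$, and it suffices to treat the case where $V$ is irreducible.

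If $V$ is the trivial module, the cocycle identity reduces to $D([x,y]) = 0$; semisimplicity gives $[\Fg,\Fg] = \Fg$, whence $D = 0 = D_0$. If $V$ is nontrivial irreducible with representation $\rho$, the trace form $B_V(x,y) := \mathrm{tr}_V(\rho(x)\rho(y))$ is invariant, and (by Cartan's criterion applied to the semisimple ideal $\Fg_V$ complementary to $\ker\rho$) nondegenerate on $\Fg_V$. Taking $B_V$-dual bases $\{e_i\},\{f_i\}$ of $\Fg_V$ and forming the Casimir $c_V := \sum_i \rho(e_i)\rho(f_i)$, Schur's lemma gives that $c_V$ acts on $V$ as a scalar $\la_V$, and $\mathrm{tr}(c_V) = \dim \Fg_V \neq 0$ forces $\la_V \neq 0$. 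Given a derivation $D$, set
\[
v := \frac{1}{\la_V}\sum_i \rho(e_i)\,D(f_i) \in V,
\]
and verify $D = D_v$ by expanding $\rho(z)v$, commuting $\rho(z)$ past $\rho(e_i)$, and invoking the cocycle identity $D([z,f_i]) = \rho(z)D(f_i) - \rho(f_i)D(z)$ together with the dual-basis invariance relation $\sum_i [z,e_i]\otimes f_i + \sum_i e_i \otimes [z,f_i] = 0$.

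The technical heart of the argument is this last verification: tracking the commutators cleanly so that the sum collapses to $\la_V D(z)$ plus terms that cancel pairwise by invariance. The bookkeeping is routine but easy to botch, which is why more conceptual treatments prefer the cohomological reformulation: $\Fg$ acts trivially on $H^\bullet(\Fg,V)$, hence so does any central element of $U(\Fg)$ including $c_V$; but $c_V$ acts on the Chevalley--Eilenberg complex $C^\bullet(\Fg,V)$ as the nonzero scalar $\la_V$ in the nontrivial irreducible case, forcing $H^1(\Fg,V) = 0$ and completing the reduction.
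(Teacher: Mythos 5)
The paper does not prove this statement at all: it is quoted verbatim from Hilton--Stammbach \cite{H96} and used as a black box, so there is no in-paper argument to compare against. Your proposal is the classical Casimir-element proof of Whitehead's first lemma and it is correct: the reduction via Weyl's theorem, the trivial-module case via $\Fg=[\Fg,\Fg]$, and the verification $\rho(z)v=\la_V D(z)$ (which also forces $D(z)=0$ for $z\in\ker\rho$) all go through as you describe. The one point worth tightening is that the theorem is stated over an arbitrary field of characteristic $0$, where Schur's lemma only says that $c_V$ lies in the division algebra $\mathrm{End}_{\Fg}(V)$, not that it is a scalar; this is harmless, since $\mathrm{tr}(c_V)=\dim\Fg_V\neq 0$ shows $c_V\neq 0$, hence $c_V$ is invertible in that division algebra and $v:=c_V^{-1}\sum_i\rho(e_i)D(f_i)$ works verbatim (alternatively, extend scalars to the algebraic closure and note that $\Der$, $\IDer$, and hence $H^1$ commute with base change). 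For what it is worth, the cited source actually follows the cohomological route you sketch in your last sentence---the Casimir acting on the Chevalley--Eilenberg complex---which yields $H^n(\Fg,V)=0$ for all $n\ge 1$ and nontrivial irreducible $V$ in one stroke, with the trivial-coefficient case of $H^1$ handled separately exactly as in your second step.
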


We now introduce the notion of biderivations.

\begin{Def}
Let $\Fg$ be a Lie algebra and $V$ be a $\Fg$-module. A bilinear map $\d:\Fg\times \Fg\longrightarrow V$ is called a biderivation from $\Fg$ to $V$ if 
\begin{align}
&\delta([x,y],z)=x\delta(y,z)-y\delta(x,z),\\
&\delta(x,[y,z])=y\delta(x,z)-z\delta(x,y)
\end{align}
hold for all $x,y,z\in\Fg$.
\end{Def}
Note that a biderivation from $\Fg$ to its adjoint representation is just a usual biderivation of $L$. It is elementary to check that a bilinear map $\d:\Fg\times\Fg\longrightarrow V$ is a biderivation if and only if the linear maps
\begin{align*}
& \d(\cdot, x):\Fg\longrightarrow V,~y\longmapsto \d(y,x),\\
&  \d(x,\cdot):\Fg\longrightarrow V,~y\longmapsto \d(x,y)
\end{align*}
are both derivations from $\Fg$ to $V$ for any $x\in\Fg$.
Assume that $\g:V\longrightarrow W$ is a $\Fg$-module homomorphism. Then the bilinear map $\g\d:\Fg\times\Fg\longrightarrow W$ is a biderivation from $V$ to $W$.

\medskip

A biderivation $\d$ from $\Fg$ to $V$ is said to be symmetric (resp. skew-symmetric) if $\d(x,y)=\d(y,x)$ (resp. $\d(x,y)=\d(y,x)$) for all $x,y\in\Fg$.  It is well-know that any biderivation can be decomposed into a sum of a symmetric biderivation and a skew-symmetric biderivation. For skew-symmetric biderivations, we have the following theorem

\begin{thm}[{\cite[Theorem 3.2]{B18}}]\label{T2.2}
Let $\Fg$ be a perfect Lie algebra and $V$ be a $\Fg$-module such that 
\beqn
Z_V(\Fg):=\{v\in V\mid xv=0~\text{for all}~x\in \Fg\}=\{0\}.
\eeqn
Then every skew-symmetric biderivation $\d:\Fg\times\Fg\longrightarrow V$ is of the form $\d(x,y)=\g([x,y])$ for some $\g\in\Hom_\Fg(\Fg,V)$.
\end{thm}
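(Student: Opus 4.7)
The plan is to construct $\gamma \in \Hom_\Fg(\Fg, V)$ with $\delta(x,y) = \gamma([x,y])$ by declaring $\gamma\bigl(\sum_i [x_i,y_i]\bigr) := \sum_i \delta(x_i,y_i)$; since $\Fg = [\Fg,\Fg]$ by perfectness, this determines $\gamma$ on all of $\Fg$. Because $\delta$ is skew-symmetric bilinear it descends to a linear map $\bar\delta : \Lambda^2 \Fg \to V$, while the bracket gives a surjection $\bar\mu : \Lambda^2 \Fg \twoheadrightarrow \Fg$, so well-definedness of $\gamma$ is precisely the claim $\bar\delta|_K = 0$, where $K := \ker \bar\mu$; $\Fg$-equivariance of $\gamma$ must then be checked separately.

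The central step is the identity
\[
2\, a \cdot \bar\delta(k) \;=\; \bar\delta\bigl(\ad(a)\cdot k\bigr), \qquad a \in \Fg,\ k \in K,
\]
with $\ad(a)$ acting diagonally on $\Lambda^2 \Fg$. I would derive it by expanding $a\delta(x_i,y_i)$ via both biderivation identities, averaging the two resulting expressions, and using skew-symmetry together with the first biderivation identity to collapse the residual sum $\sum_i\bigl(x_i\delta(a,y_i)+y_i\delta(x_i,a)\bigr)$ into $-\delta\bigl(\sum_i [x_i,y_i],\, a\bigr) = -\delta(0,a) = 0$. Note that $\ad(a)\cdot k$ still lies in $K$ by $\Fg$-equivariance of $\bar\mu$. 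Writing $v := \bar\delta(k)$ and iterating the identity with a second element $b \in \Fg$ yields $4\,bav = \bar\delta(\ad(b)\ad(a)\cdot k)$ and $4\,abv = \bar\delta(\ad(a)\ad(b)\cdot k)$; the Jacobi-consequence $[\ad(b),\ad(a)] = \ad([b,a])$ on $\Lambda^2 \Fg$ then gives
\[
4\,[b,a]\cdot v \;=\; \bar\delta\bigl(\ad([b,a])\cdot k\bigr) \;=\; 2\,[b,a]\cdot v,
\]
forcing $[b,a]\cdot v = 0$. By perfectness $\Fg = [\Fg,\Fg]$, so $\Fg \cdot v = 0$, i.e.\ $v \in Z_V(\Fg) = \{0\}$, and hence $\bar\delta|_K = 0$.

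With $\gamma$ well-defined, $\Fg$-equivariance $a\gamma(z) = \gamma([a,z])$ reduces on $z = [x,y]$ to the cyclic identity $a\delta(x,y) + x\delta(y,a) + y\delta(a,x) = 0$; this falls out of applying $\bar\delta$ (now known to annihilate $K$) to the Jacobi-induced element $[a,x]\wedge y + [x,y]\wedge a + [y,a]\wedge x \in K$ and recognising the result, via the biderivation identities, as twice the cyclic expression. Equivariance itself is then a three-line verification using this cyclic identity. The main obstacle is the bookkeeping needed to extract the compact identity $2\,a v = \bar\delta(\ad(a) k)$ with correct signs and then close the double-commutator step to $[b,a]\cdot v = 0$; once those are in place, perfectness of $\Fg$ and the hypothesis $Z_V(\Fg) = \{0\}$ finish the proof automatically.
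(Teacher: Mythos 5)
The paper does not prove this statement; it imports it verbatim from \cite[Theorem 3.2]{B18}, so there is no in-paper argument to compare yours against. Judged on its own, your proof is correct and complete. The key identity checks out: summing the two expansions $a\delta(x_i,y_i)=\delta([a,x_i],y_i)+x_i\delta(a,y_i)$ and $a\delta(x_i,y_i)=\delta(x_i,[a,y_i])+y_i\delta(x_i,a)$ over $i$ gives $2a\,\bar\delta(k)=\bar\delta(\ad(a)k)+\sum_i\bigl(x_i\delta(a,y_i)+y_i\delta(x_i,a)\bigr)$, and the residual sum equals $\delta\bigl(\sum_i[x_i,y_i],a\bigr)=0$ for $k\in K$ by skew-symmetry and the first biderivation identity, exactly as you say. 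The commutator trick $4[b,a]v=\bar\delta(\ad([b,a])k)=2[b,a]v$ then forces $[\Fg,\Fg]\,v=0$, and perfectness plus $Z_V(\Fg)=0$ kills $v$; the cyclic identity $a\delta(x,y)+x\delta(y,a)+y\delta(a,x)=0$ obtained from the Jacobi element in $K$ does yield equivariance of $\gamma$ on brackets, which suffices since $\Fg=[\Fg,\Fg]$. Your route through $\Lambda^2\Fg$ and the eigenvalue-style clash ($2$ versus $4$) is a clean repackaging of the Bre\v{s}ar--Zhao method, whose original form instead expands $\delta([x,u],[y,v])$ in two ways to get a four-term identity and extracts the same conclusions from it; your version has the advantage of isolating well-definedness of $\gamma$ as the single statement $\bar\delta|_{K}=0$ and making the role of each hypothesis (perfectness for surjectivity of $\bar\mu$ and for $[\Fg,\Fg]v=0$; triviality of $Z_V(\Fg)$ for the final step) completely explicit.
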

In contrast to the skew-symmetric biderivations, there are no sufficient tools to determine the symmetric biderivations. In this paper, we mainly study the symmetric biderivations from a complex semisimple Lie algebra to its finite-dimensional module. The $\Fsl_2(\CC)$ case has been studied in \cite{Wang22}. 

\begin{lem}[\cite{Wang22}]\label{L2.3}
Let $V$ be an irreducible $\Fsl_2(\CC)$-module. Then all the biderivations  from $\Fsl_2(\CC)$ to $V$ are trival.
\end{lem}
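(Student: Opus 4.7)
The plan is to split any biderivation $\delta : \mathfrak{sl}_2(\CC) \times \mathfrak{sl}_2(\CC) \to V$ as $\delta = \delta^s + \delta^a$ and handle the two parts separately, so that the existing heavy tools (Theorems~\ref{White} and~\ref{T2.2}) carry most of the load. The skew-symmetric piece $\delta^a$ is immediately governed by Theorem~\ref{T2.2}: $\mathfrak{sl}_2(\CC)$ is perfect, and the centralizer $Z_V(\mathfrak{sl}_2(\CC))$ vanishes for any nontrivial irreducible $V$, so $\delta^a(x,y) = \gamma([x,y])$ for some $\gamma \in \Hom_{\mathfrak{sl}_2(\CC)}(\mathfrak{sl}_2(\CC), V)$. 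By Schur's lemma this $\gamma$ is zero unless $V$ is the adjoint module, in which case $\delta^a$ is a scalar multiple of the bracket; in either case $\delta^a$ is of the trivial form. The trivial module $V = \CC$ is immediate from $\mathfrak{sl}_2(\CC) = [\mathfrak{sl}_2(\CC), \mathfrak{sl}_2(\CC)]$.

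The substance of the lemma is therefore the symmetric part. Fixing $x$ and applying Whitehead's Lemma~\ref{White} to the derivation $y \mapsto \delta^s(y,x)$ produces a unique $\phi(x) \in V$ with $\delta^s(y,x) = y \cdot \phi(x)$ (uniqueness uses $V^{\mathfrak{sl}_2(\CC)} = 0$). The resulting map $\phi$ is forced to be linear, and the symmetry of $\delta^s$ converts into the single identity
\beqn
x \cdot \phi(y) = y \cdot \phi(x), \qquad \forall\, x, y \in \mathfrak{sl}_2(\CC).
\eeqn
A direct check confirms that the biderivation axioms (2.1)--(2.2) follow automatically from this identity, so the lemma reduces to proving that any linear $\phi$ satisfying it must vanish.

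For the final step I would specialize to $V = V(n)$ with its standard weight basis $\{v_0, \ldots, v_n\}$, set $A = \phi(e)$, $B = \phi(h)$, $C = \phi(f)$, and translate the identity into the three explicit equations
\beqn
hA = eB, \qquad hC = fB, \qquad eC = fA.
\eeqn
Expanding in the weight basis turns these into linear recursions among the coefficients of $A$, $B$, $C$: the first equation gives $\beta_{i+1}$ in terms of $\alpha_i$, and substituting into the remaining two yields two different expressions for each $\gamma_j$ in terms of $\alpha_{j-2}$; a short polynomial manipulation shows that their discrepancy has coefficient $-n(n+2) \neq 0$, which forces all $\alpha_i = 0$, and back-substitution then kills $B$ and $C$. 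The main technical nuisance is bookkeeping at the degenerate indices $i = 0,\, n$ and (for even $n$) $i = n/2$, where the recursion denominators vanish; however at each such index one of the three equations directly yields its own vanishing statement (for example $\alpha_n = 0$ from $i = n$ in the first equation and $\gamma_0 = 0$ from $i = 0$ in the second), so the case analysis closes cleanly and forces $\phi \equiv 0$.
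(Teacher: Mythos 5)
The paper does not actually prove this lemma: it is imported verbatim from \cite{Wang22}, so there is no internal proof to match. Judged on its own terms, your argument is a correct, self-contained proof in the finite-dimensional case, and it is arguably more conceptual than the concrete computations the paper attributes to the literature. The splitting $\delta=\delta^s+\delta^a$ is legitimate (each summand is again a biderivation), Theorem~\ref{T2.2} disposes of $\delta^a$ exactly as you say, the trivial module is handled by perfectness, and the reduction of $\delta^s$ via Whitehead's Lemma to a linear $\phi$ with $x\cdot\phi(y)=y\cdot\phi(x)$ is clean (uniqueness of $\phi(x)$ does need $V^{\mathfrak{sl}_2}=0$, which holds for nontrivial irreducible $V$). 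I checked the final computation: with the standard basis of $V(n)$ the two expressions for $\gamma_i$ differ by the factor $i(n-i+1)(n-2i+4)-(i-1)(n-i+2)(n-2i)=n(n+2)\neq 0$, and the boundary cases you list ($\alpha_n$, $\alpha_{n-1}$, $\gamma_0$, $\gamma_1$, and the even-$n$ index $i=n/2$ via $\beta_{n/2-1}=0$) do close the induction, so all of $A,B,C$ vanish. This identity-based reduction is in fact the same mechanism the paper later exploits abstractly in Proposition~\ref{P3.1} and Theorem~\ref{T3.1}, so your proof is well aligned with the paper's overall philosophy.

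One caveat you should state explicitly: the lemma as written (and as proved in \cite{Wang22}, whose title refers to \emph{all} simple modules) does not assume $V$ finite-dimensional, whereas both of your main tools break down otherwise --- Whitehead's Lemma~\ref{White} requires $\dim V<\infty$, and your terminal computation is carried out in $V(n)$. So what you have proved is the finite-dimensional case only. Since Corollary~\ref{C2.4} and everything downstream in the paper use the lemma exclusively for finite-dimensional modules, this restriction is harmless for the paper's purposes, but it should be flagged as a genuine narrowing of the stated result rather than left implicit.
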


Now we show the following corollary which will be used in the next section.

\begin{cor}\label{C2.4}
All the biderivations  from $\Fsl_2(\CC)$ to its finite-dimensional module are trival.
\end{cor}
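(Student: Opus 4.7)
The plan is to reduce the general finite-dimensional case to the irreducible case already handled in Lemma \ref{L2.3} via Weyl's complete reducibility theorem. Since $\Fsl_2(\CC)$ is semisimple, any finite-dimensional module $V$ decomposes as a direct sum $V=\bigoplus_{i=1}^{n} V_i$ of irreducible $\Fsl_2(\CC)$-submodules. Let $\pi_i\colon V\longrightarrow V_i$ be the associated projection, which is a $\Fsl_2(\CC)$-module homomorphism.

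Let $\d\colon \Fsl_2(\CC)\times\Fsl_2(\CC)\longrightarrow V$ be any biderivation. By the remark just after the definition of biderivations in the excerpt, composing a biderivation with a module homomorphism yields another biderivation, so each $\pi_i\circ\d\colon\Fsl_2(\CC)\times\Fsl_2(\CC)\longrightarrow V_i$ is a biderivation into the irreducible module $V_i$. Applying Lemma \ref{L2.3} to each $V_i$, we conclude that every $\pi_i\circ\d$ is trivial.

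Since $\id_V=\sum_{i=1}^n \pi_i$, we then have
\beqn
\d(x,y)=\sum_{i=1}^n (\pi_i\circ\d)(x,y)=0
\eeqn
for all $x,y\in\Fsl_2(\CC)$, so $\d$ itself is trivial.

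There is essentially no obstacle here: the only nontrivial ingredient is complete reducibility of finite-dimensional $\Fsl_2(\CC)$-modules (Weyl's theorem), together with the already-noted fact that module homomorphisms transport biderivations to biderivations. If one wanted to be completely self-contained, a small point to verify is that ``trivial'' in Lemma \ref{L2.3} indeed means the zero map, so that summing over the projections gives $\d=0$ rather than some combination of nonzero pieces; this is consistent with the usage elsewhere in the paper.
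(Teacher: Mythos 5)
Your proof is correct and follows essentially the same route as the paper: decompose $V$ into irreducibles via Weyl's theorem, compose $\d$ with the canonical projections to get biderivations into each irreducible summand, and apply Lemma \ref{L2.3}. The only (cosmetic) difference is that the paper's own proof restricts attention to a symmetric $\d$, whereas your argument, like the statement itself, covers arbitrary biderivations.
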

\begin{proof}
Let $V$ be a finite-dimensional $\Fsl_2$-module. By Weyl's theorem, $V$ is completely irruducible, i.e., V can be written as a direct sum of its irreducible submodules, say
\beqn
V=V_1\oplus V_2\oplus \cdots\oplus V_m.
\eeqn
For $1\leq i\leq m$, let $p_i:V\longrightarrow V_i$ be the canonical projection. Let $\d$ be a symmetric biderivation from $\Fsl_2$ to $V$. Then for each $1\le i\le m$, $p_i\d$ is a symmetric biderivation from $\Fsl_2$ to $V_i$. By the above lemma, $p_i\d=0$. It follows that the image of $\d$ is $0$. Hence, $\d=0$.
\end{proof}

\section{Main Result}

We first recall some basic facts about finite-dimensional semisimple complex Lie algebras and their finite dimensional modules.

\medskip
 
In the following, let $\Fg$ be a finite-dimensional complex semisimple Lie algebra, all the $\Fg$-modules are assumed to be   finite-dimensional. Let $\Fh$ be a  Cartan subalgebra of $\Fg$. Then $\Fg$ admits a Cartan decomposition 
\beqn
\Fg=\Fh\oplus\sum_{\a\in\Phi} \Fg_{\a},
\eeqn
where $\Phi\subset\Fh^*$ is the  root system. Let $\Phi^+$ (resp. $\Phi^-$) be the set of positive (resp. negative) roots with respect to a certain chosen base. Set
\beqn
\Fn^+=\sum_{\a\in\Phi^+}\Fg_\a,~\Fn^-=\sum_{\a\in\Phi^-}\Fg_\a.
\eeqn
For each $\a\in\Phi^+$, choose an nonzero element $e_{\a}\in \Fg_{\a}$. Then there exist  unique 
$f_\a\in\Phi^-$ and $h_\a\in\Fh$ such that 
\beqn
[h_\a,e_\a]=2e_\a,~[h_\a,f_\a]=-2f_\a,~[e_\a,f_\a]=h_\a.
\eeqn
Hence $S_\a=\CC e_\a\oplus\CC h_\a\oplus\CC f_\a$ is a subalgebra of $\Fg$, which is isomorphic to $\Fsl_2(\CC)$. It is well known that $\Fg$ is spanned by the elements $e_\a,f_\a,h_\a~\a\in\Phi^+$.

\medskip

Let $V$ be a finite-dimensional irreducible $\Fg$-module. It is well known that $V$ is both  a highest weight module and a lowest weight module. Hence, if $\dim V>1$, then for each weight vector $v$ of $V$, there exist weight vectors (or $0$) $v_\a,v_\a',\a\in\Phi^+$ such that   
\beqn
w=\sum_{\a\in \Phi^+}(e_\a v_\a+f_\a v_\a').
\eeqn

\medskip

We have the following proposition.

\begin{prop}\label{P3.1}
	Let $\Fg$ be a finite-dimensional semisimple complex Lie algebra and $V, W$ be $\Fg$-modules with $V$ finite-dimensional. Assume that a bilinear map $\d:\Fg\times V\longrightarrow W$ satisfying the following conditions
	\begin{itemize}
		\item[(1)] For each $x\in\Fg$, $\d(x,\cdot):V\longrightarrow W$ is a $\Fg$-module homomorphism;
		\item[(2)] For each $v\in V$, $\d(\cdot,v):\Fg\longrightarrow W$ is a derivation from $\Fg$ to $W$,
	\end{itemize}
	then $\d=0$.
\end{prop}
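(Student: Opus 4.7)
The plan is to perform an $\Fh$-weight analysis of $\d$ and then reduce the remaining on-diagonal terms to $\Fsl_2(\CC)$ structure theory inside each root triple $S_\a$. First, since $\d(x,\cdot)\colon V\to W$ is a $\Fg$-module homomorphism by (1), its image is a $\Fg$-submodule of $W$, and summing over a basis of $\Fg$ shows that the image of the bilinear map $\d$ lies in a finite-dimensional $\Fg$-submodule $W'\subseteq W$. Replacing $W$ by $W'$, I may assume $W$ is finite-dimensional, hence completely reducible with a weight space decomposition $W=\bigoplus_\mu W_\mu$. Using only the $\Fh$-equivariance of $\d(x,\cdot)$, one sees that $\d(x,V_\mu)\subseteq W_\mu$ for every $x\in\Fg$.

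The central computation is to feed the bracket $[h,e_\a]=\a(h)e_\a$ into condition (2) with $h\in\Fh$ and $v\in V_\mu$:
\[
\a(h)\,\d(e_\a,v)=h\cdot\d(e_\a,v)-e_\a\cdot\d(h,v).
\]
The first two terms lie in $W_\mu$ while the third lies in $W_{\mu+\a}$, so the weight-$(\mu+\a)$ component forces $e_\a\cdot\d(h,v)=0$ and the weight-$\mu$ component forces $(\mu(h)-\a(h))\,\d(e_\a,v)=0$ for every $h\in\Fh$. Thus $\d(e_\a,v_\mu)=0$ unless $\mu=\a$. The symmetric computation with $[h,f_\a]=-\a(h)f_\a$ yields $f_\a\cdot\d(h,v)=0$ and $\d(f_\a,v_\mu)=0$ unless $\mu=-\a$.

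Consequently $\d(h,v)$ is annihilated by every $e_\a$ and $f_\a$; since $\Fh\subseteq[\Fn^+,\Fn^-]$ this gives $\d(h,v)\in W^\Fg\subseteq W_0$. For $v\in V_\mu$ with $\mu\neq 0$ this means $\d(h,v_\mu)\in W^\Fg\cap W_\mu=0$, and for $\mu=0$ I would apply (2) to $[e_\a,f_\a]=h_\a$ together with $\d(e_\a,v_0)=\d(f_\a,v_0)=0$ to deduce $\d(h_\a,v_0)=0$; as the $h_\a$ span $\Fh$, this finishes $\d|_{\Fh\times V}=0$. For $v_\a\in V_\a$ the same bracket gives
\[
0=\d(h_\a,v_\a)=e_\a\,\d(f_\a,v_\a)-f_\a\,\d(e_\a,v_\a)=-f_\a\,\d(e_\a,v_\a),
\]
so $\d(e_\a,v_\a)\in W_\a$ has $h_\a$-weight $\a(h_\a)=2$ and is annihilated by $f_\a$. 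Inside the finite-dimensional, hence $\Fsl_2$-semisimple, $S_\a$-module $W$, every vector annihilated by $f_\a$ has non-positive $h_\a$-weight, so $\d(e_\a,v_\a)=0$; the mirror argument kills $\d(f_\a,v_{-\a})$. Since the $e_\a,f_\a,h_\a$ ($\a\in\Phi^+$) span $\Fg$, we conclude $\d=0$. The main obstacle is the careful weight bookkeeping that extracts two independent consequences---the vanishing of $e_\a\cdot\d(h,v)$ and the off-diagonal vanishing of $\d(e_\a,v_\mu)$---from a single equation; once that is in place, the $\Fsl_2$ theory cleanly dispatches the on-diagonal remnants.
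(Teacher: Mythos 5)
Your proof is correct, and it takes a genuinely different route from the paper's. The paper first reduces to $V$ irreducible (via Weyl's theorem), then invokes Whitehead's lemma to write $\d(\cdot,v)$ and $\d(\cdot,v')$ as inner derivations for the highest and lowest weight vectors $v,v'$, observes that the image $W'=\d(\Fg,V)$ is a sum of copies of $V$ so that $f_\a W'\cap W'_\La=e_\a W'\cap W'_{\La'}=0$, deduces $\d(e_\a,v')=\d(f_\a,v)=0$, and then uses irreducibility of $V$ together with condition (1) to kill $\d(e_\a,V)$ and $\d(f_\a,V)$. You instead avoid both Whitehead's lemma and the reduction to irreducible $V$: your weight-space bookkeeping on the single identity $\d([h,e_\a],v)=h\d(e_\a,v)-e_\a\d(h,v)$, split into its $W_\mu$- and $W_{\mu+\a}$-components, yields the off-diagonal vanishing $\d(e_\a,v_\mu)=0$ for $\mu\ne\a$ \emph{and} the relation $e_\a\d(h,v)=0$ simultaneously, after which the on-diagonal terms $\d(e_\a,v_\a)$ die by the elementary fact that in a finite-dimensional $\Fsl_2$-module a vector of positive $h_\a$-weight killed by $f_\a$ is zero. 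Your argument is more self-contained (it needs only the weight decomposition of $V$ and of the finite-dimensional submodule $W'$, plus basic $\Fsl_2$ theory, rather than the vanishing of $H^1$), and it treats all finite-dimensional $V$ uniformly; the paper's argument is shorter once Whitehead's lemma is granted and fits the cohomological framing the authors set up in Section 2. All the individual steps you take check out, including the reduction to $W$ finite-dimensional and the use of $[e_\a,f_\a]=h_\a$ to handle the weight-$0$ and weight-$\pm\a$ cases.
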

\begin{proof}
	We first assume that $V$ is irreducible with highest weight vector $v$ of highest weight $\La$ and lowest weight vector $v'$ of lowest weight $\La'$. Let $W'=\d(\Fg,V)$ be the image of $\d$, then
	\beqn
	W'=\sum_{\a\in\Phi^+}\(\d(e_\a,V)+\d(h_\a,V)+\d(f_\a,V)\).
	\eeqn
	It is obviously that $\dim W'<\infty$ and
	\beqn
	f_\a W'\cap W'_\La=e_\a W'\cap W'_{\La'}=\{0\},~\forall \a\in\Phi^+.
	\eeqn
	Since $\d(\cdot,v)$ and $\d(\cdot,v')$ are both derivations from $\Fg$ to $W'$, by Whitehead's lemma, there exist $w,w'\in W'$ such that
	\beqn
	\d(x,v)=xw,~\d(x,v')=xw', \forall x\in\Fg.
	\eeqn
	In particular, 
	\begin{align*}
		& \d(e_\a,v')=e_\a w'\in e_\a W'\cap W'_{\La'}=\{0\},\\
		& \d(f_\a,v)=f_\a w\in f_\a W'\cap W'_{\La}=\{0\},
	\end{align*}
	for any $\a\in\Phi^+$.
	It follows that
	\beqn
	\d(e_\a,V)=\d(f_\a,V)=0,~\forall \a\in\Phi^+,
	\eeqn
	since $\d(e_\a,\cdot)$ and $\d(f_\a,\cdot)$ are both $\Fg$-homomorphisms and $V$ is irreducible. 
	Since $\Fg$ is generated by the elements $e_{\a},f_{\a}$'s and $\d(\cdot,v)$ is a derivation for each $v\in V$, we obtain that 
	\beqn
	\d(\Fg,V)=0.
	\eeqn
	That is $\d=0$.
	
	\medskip
	
	Now, consider the general case. Let $V=V_1\oplus V_2\oplus \cdots\oplus V_m$ with each $V_i$ irreducible.
	For each $i$, the bilinear map $\d|_{\Fg\times V_i}$ satisfies the condition (1) and (2). Hence, $\d|_{\Fg\times V_i}=0$, i.e. $\d(\Fg,V_i)=0$ since $V_i$ is irreducible. We obtain
	\beqn
	\d(\Fg,V)=\d(\Fg,V_1\oplus V_2\oplus \cdots\oplus V_m)=\d(\Fg,V_1)+\d(\Fg,V_2)+\cdots+\d(\Fg,V_m)=0.
	\eeqn
	Equivalently, $\d=0$.
\end{proof}

Now we can prove our main result.

\begin{thm}\label{T3.1}
	Let $\Fg$ be a finite-dimensional semisimple complex Lie algebra and $V$ be a finite-dimensional $\Fg$-module. 
	Then all  the symmetric biderivations from $\Fg$ to $V$ are all trival.
\end{thm}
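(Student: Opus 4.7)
The plan: reduce to $V$ irreducible via Weyl's complete reducibility, dispatch the trivial case elementarily, and handle the nontrivial irreducible case by combining Whitehead's Lemma with a Casimir computation.

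Projecting $\delta$ onto each irreducible summand of $V$ produces again a symmetric biderivation (the projections are $\Fg$-equivariant), so one may assume $V$ is irreducible. If $V$ is the trivial module, biderivation axiom 1 gives $\delta([x,y],z)=0$, and the identity $[\Fg,\Fg]=\Fg$ forces $\delta=0$. So assume henceforth that $V$ is nontrivial irreducible, whence $V^\Fg=0$. For each fixed $x\in\Fg$, the map $\delta(x,\cdot):\Fg\to V$ is a derivation by the second biderivation axiom, so by Whitehead's Lemma (Theorem \ref{White}) there is a unique $\phi(x)\in V$ with $\delta(x,y)=y\phi(x)$ for all $y$. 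The assignment $\phi:\Fg\to V$ is linear, and the symmetry $\delta(x,y)=\delta(y,x)$ translates into the key identity $y\phi(x)=x\phi(y)$ for all $x,y\in\Fg$. It now suffices to show $\phi=0$.

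The main step is a Casimir computation. Let $\{x_i\}$ and $\{x^i\}$ be bases of $\Fg$ dual with respect to the Killing form, and set $c:=\sum_i x_i\phi(x^i)\in V$. Applying the Leibniz rule $a(x_i\phi(x^i))=[a,x_i]\phi(x^i)+x_i(a\phi(x^i))$, the symmetry $a\phi(x^i)=x^i\phi(a)$, and the $\Fg$-invariance of the Casimir tensor $\sum_i x_i\otimes x^i$ (which together with the symmetry of $\phi$ forces the commutator term $\sum_i[a,x_i]\phi(x^i)$ to cancel), one expects to obtain
\[ a\cdot c = c_V\phi(a)\qquad\text{for all }a\in\Fg, \]
where $c_V\neq 0$ is the Casimir eigenvalue on the nontrivial irreducible $V$.

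Finally, substituting $\phi(a)=c_V^{-1}\, a\cdot c$ into $y\phi(x)=x\phi(y)$ yields $y(xc)=x(yc)$, i.e.\ $[y,x]\cdot c=0$ for all $x,y\in\Fg$. By $[\Fg,\Fg]=\Fg$ this gives $\Fg\cdot c=0$, hence $c\in V^\Fg=\{0\}$; so $c=0$ and $\phi=0$, giving $\delta=0$. The main obstacle is establishing the Casimir identity $a\cdot c=c_V\phi(a)$, and specifically the cancellation $\sum_i[a,x_i]\phi(x^i)=0$, which rests on coordinating two different symmetries: $\Fg$-invariance of the Casimir tensor on one hand, and the symmetry $y\phi(x)=x\phi(y)$ on the other.
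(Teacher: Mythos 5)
Your proof is correct, but it takes a genuinely different route from the paper's. The paper reduces everything to the root $\Fsl_2$-triples $S_\a=\CC e_\a\oplus\CC h_\a\oplus\CC f_\a$: it first kills $\d|_{S_\a\times S_\a}$ by invoking an external result on biderivations from $\Fsl_2(\CC)$ to its modules (Lemma \ref{L2.3}, proved elsewhere by explicit computation), then propagates this to $\d|_{S_\a\times\Fg}=0$ via a highest/lowest-weight-vector argument (Proposition \ref{P3.1}), and concludes since the $S_\a$ span $\Fg$. You instead linearize the whole problem at once: Whitehead's Lemma plus $V^{\Fg}=0$ give a well-defined linear $\phi:\Fg\to V$ with $\d(x,y)=y\phi(x)$, symmetry becomes $y\phi(x)=x\phi(y)$, and the Casimir element finishes the job. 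Your one hedged step — the vanishing of $\sum_i[a,x_i]\phi(x^i)$ — does go through: applying the map $u\otimes w\mapsto u\phi(w)$ to the invariance identity $\sum_i\bigl([a,x_i]\otimes x^i+x_i\otimes[a,x^i]\bigr)=0$, then using the symmetry $u\phi(w)=w\phi(u)$ and the basis-independence (hence symmetry) of the tensor $\sum_i x_i\otimes x^i$, yields $2\sum_i[a,x_i]\phi(x^i)=0$; and $c_V=(\la,\la+2\rho)\neq 0$ on a nontrivial irreducible. What your approach buys: it is self-contained modulo Whitehead and the standard Casimir eigenvalue fact, avoids the dependence on the $\Fsl_2$ case from \cite{Wang22} entirely, and in fact only uses symmetry together with the single axiom that $\d(x,\cdot)$ is a derivation. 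What the paper's approach buys: its intermediate results (Corollary \ref{C2.4}, Proposition \ref{P3.1}) are reused verbatim in the later applications (Corollary \ref{C3.4}, Theorems \ref{T4.4} and \ref{T4.5}), so the $S_\a$-reduction is not wasted effort in the context of the whole paper.
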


\begin{proof}
	Assume that $\d$ is a symmetric biderivations from $\Fg$ to $V$. Let $\a\in\Phi^+$. Note that the bilinear map $\d_{S_\a\times S_\a}$ gives a symmetric biderivations from $S_\a$ to $V$. Then, by Corollary \ref{C2.4}, $\d|_{S_\a\times S_\a}=0$, i.e.,
	\beq
	\d(x,y)=0,~\forall x,y\in S_\a.
	\eeq
    Let $x\in S_\a$. For any $y\in S_\a$ and $z\in\Fg$, one has
    \beqn
    \d(x,[y,z])=y\d(x,z)-z\d(x,y)=y\d(x,z).
    \eeqn
    It follows that 
    \beqn
    \d(x,\cdot):\Fg\longrightarrow V,~z\longmapsto \d(x,z)
    \eeqn
    is an $S_\a$-module homomorphism. Hence, the bilinear map $\d|_{S_\a\times\Fg}$ satisfies the conditions (1) and (2) in above proposition. It follows that $\d|_{S_\a\times\Fg}=0$. In particular,
    \beqn
    \d(e_\a,\Fg)=\d(f_\a,\Fg)=\d(h_\a,\Fg)=0.
    \eeqn
    Since $\Fg$ is spanned by the elements $e_\a,f_\a,h_\a~\a\in\Phi^+$, we have $\d(\Fg,\Fg)=0$, i.e. $\d=0$.
\end{proof}

We can have immediately the following corollary, which was also obtained in \cite{T18(1)}.
\begin{cor}
	Let $\Fg$ be a finite-dimensional complex semisimple Lie algebra. Then all the symmetric biderivations $\Fg$  are trival. 
\end{cor}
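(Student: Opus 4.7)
The plan is to reduce the assertion to the subalgebras $S_\a=\CC e_\a\oplus\CC h_\a\oplus\CC f_\a\cong\Fsl_2(\CC)$ attached to positive roots $\a\in\Phi^+$, and then to invoke the two tools assembled earlier: Corollary~\ref{C2.4}, which kills symmetric biderivations into finite-dimensional $\Fsl_2(\CC)$-modules, together with Proposition~\ref{P3.1}, which will propagate vanishing from such a subalgebra to the whole of $\Fg$.

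First I would fix $\a\in\Phi^+$ and restrict the symmetric biderivation $\d$ to $S_\a\times S_\a$. This restriction is again a symmetric biderivation, now from $\Fsl_2(\CC)$ into the finite-dimensional $S_\a$-module $V$, hence is forced to be zero by Corollary~\ref{C2.4}, so $\d(x,y)=0$ for all $x,y\in S_\a$. Next I would apply Proposition~\ref{P3.1} to the restricted bilinear map $\d|_{S_\a\times\Fg}:S_\a\times\Fg\to V$, with $S_\a$ playing the role of the semisimple Lie algebra and $\Fg$ (under the adjoint $S_\a$-action) playing the role of the source module. Condition~(2) of that proposition, that $\d(\cdot,z):S_\a\to V$ is a derivation for each $z\in\Fg$, is immediate from the first biderivation axiom. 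Condition~(1), that for each $x\in S_\a$ the map $\d(x,\cdot):\Fg\to V$ is an $S_\a$-module homomorphism, follows from the second biderivation axiom $\d(x,[y,z])=y\d(x,z)-z\d(x,y)$ applied with $y\in S_\a$, after using the first step to kill the term $z\d(x,y)$. Proposition~\ref{P3.1} then yields $\d(S_\a,\Fg)=0$, and in particular $\d(e_\a,\cdot)=\d(h_\a,\cdot)=\d(f_\a,\cdot)=0$.

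Letting $\a$ range over $\Phi^+$ and using that $\Fg$ is spanned by $\{e_\a,h_\a,f_\a:\a\in\Phi^+\}$, bilinearity of $\d$ then forces $\d\equiv 0$. The main delicate point is the verification of condition~(1) of Proposition~\ref{P3.1}: this is exactly where symmetry of $\d$ is essential, since the initial vanishing $\d(x,y)=0$ for $x,y\in S_\a$ uses Corollary~\ref{C2.4} in its symmetric form, and without that vanishing one could not cancel the term $z\d(x,y)$ appearing in the second biderivation identity. Once condition~(1) is secured, the remainder of the argument reduces to straightforward bookkeeping over the positive roots.
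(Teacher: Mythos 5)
Your argument is correct and is essentially the paper's own route: the paper obtains this corollary immediately by applying Theorem~\ref{T3.1} to the adjoint module $V=\Fg$, and what you have written out is precisely the proof of Theorem~\ref{T3.1} (Corollary~\ref{C2.4} on each $S_\a$, then Proposition~\ref{P3.1} applied to $\d|_{S_\a\times\Fg}$, then spanning by the $e_\a,f_\a,h_\a$) specialized to that case. So the only difference is that you inline the proof of Theorem~\ref{T3.1} rather than citing it.
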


Moreover, we have 

\begin{cor}\label{C3.4}
	Let $\Fg$ be a finite-dimensional complex Lie algebra containing a semisimple subalgebra $\Fg_0$, $V$ be a finite-dimensional $\Fg$-module. 
	 If $\d$ is a symmetric biderivation from $\Fg$ to $V$, then $\d(\Fg_0,\Fg)=0$. 
\end{cor}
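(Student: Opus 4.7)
The plan is to reduce the statement to Proposition \ref{P3.1}, applied with the semisimple Lie algebra taken to be $\Fg_0$ and with the finite-dimensional $\Fg_0$-module taken to be $\Fg$ itself, equipped with the adjoint action inherited from the $\Fg$-module structure on $V$ (in fact from the bracket on $\Fg$). In this way, showing $\d(\Fg_0,\Fg)=0$ amounts to verifying the two input hypotheses of Proposition \ref{P3.1} for the restriction $\d|_{\Fg_0\times\Fg}$.

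First I would restrict $\d$ to $\Fg_0\times\Fg_0$. Since $\Fg_0$ is finite-dimensional semisimple and $V$ is a finite-dimensional $\Fg_0$-module by restriction, Theorem \ref{T3.1} applies to the symmetric biderivation $\d|_{\Fg_0\times\Fg_0}:\Fg_0\times\Fg_0\to V$ and gives $\d(x,y)=0$ for all $x,y\in\Fg_0$. This base case is the only place where symmetry of $\d$ is used.

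Next, for $x\in\Fg_0$, $y\in\Fg_0$, $z\in\Fg$, the second biderivation identity together with the previous step yields
\[
\d(x,[y,z]) = y\d(x,z) - z\d(x,y) = y\d(x,z),
\]
which is precisely the statement that $\d(x,\cdot):\Fg\to V$ is a $\Fg_0$-module homomorphism with respect to the adjoint $\Fg_0$-action on $\Fg$. This supplies condition (1) of Proposition \ref{P3.1}. Condition (2) is automatic: for each fixed $v\in\Fg$, the map $\d(\cdot,v):\Fg_0\to V$ is a derivation, since it is the restriction to $\Fg_0$ of the derivation $\d(\cdot,v):\Fg\to V$ coming from the first biderivation identity.

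Proposition \ref{P3.1} then forces $\d|_{\Fg_0\times\Fg}=0$, which is exactly the claim. The only place that requires care is the bookkeeping of module structures — namely, that $\Fg$ really is a finite-dimensional $\Fg_0$-module under the adjoint action, and that the identity derived above is genuinely $\Fg_0$-linearity for $\d(x,\cdot)$ with this action — but once this is spelled out, the corollary is an immediate consequence of Theorem \ref{T3.1} and Proposition \ref{P3.1}, with no further calculation needed.
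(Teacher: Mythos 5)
Your proposal is correct and follows essentially the same route as the paper's own proof: apply Theorem \ref{T3.1} to get $\d|_{\Fg_0\times\Fg_0}=0$, then use the second biderivation identity to see that $\d(x,\cdot)$ is a $\Fg_0$-module homomorphism for the adjoint action on $\Fg$, and conclude via Proposition \ref{P3.1}. Your added remarks on where symmetry is used and on the bookkeeping of module structures are accurate but do not change the argument.
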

\begin{proof}
	 By Theorem \ref{T3.1}, $\d|_{\Fg_0\times\Fg_0}=0$. Note that $\Fg$ can be viewed as a $\Fg_0$-module via the adjoint action. Let $x\in\Fg_0$. For any $y\in\Fg_0,~z\in\Fg$, one has
	\beqn
	\d(x,[y,z])=y\d(x,z)-z\d(x,y)=y\d(x,z),
	\eeqn
	which implies that $\d(x,\cdot):\Fg\longrightarrow \Fg$ is a $\Fg_0$-module homomorphism. Hence, the bilinear map
	\beqn
	\d|_{\Fg_0\times \Fg}:\Fg_0\times\Fg\longrightarrow V
	\eeqn
	satisfies the conditions (1) and (2) Proposition \ref{P3.1}. It follows that $\d(\Fg_0,\Fg)=0$.
\end{proof}


\section{Applications}

In this section, we give some examples to show how to use Theorem \ref{T3.1}  to determine all the symmetric biderivations of some Lie algebras.

\subsection{Preliminaries}

Let's do some preparations for the further discussion.
The following lemma can be easily checked by a direct computation. 

\begin{lem}\label{L4.1}
	Let  $\d: \Fg\times\Fg\longrightarrow V$ be a symmetric biderivation. Then we have
	\beqn
	\d(x,[y,z])+\d(y,[z,x])+\d(z,[x,y])=0,~\forall x,y,z\in\Fg.
	\eeqn
\end{lem}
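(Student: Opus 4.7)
The plan is to apply the second biderivation axiom in all three cyclic permutations and then exploit symmetry to cancel everything out. This should be a direct computation with no obstacles.

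First I would write down the three cyclic instances of the identity
\beqn
\d(x,[y,z])=y\d(x,z)-z\d(x,y),
\eeqn
namely the one displayed, together with
\beqn
\d(y,[z,x])=z\d(y,x)-x\d(y,z),\qquad \d(z,[x,y])=x\d(z,y)-y\d(z,x).
\eeqn
Adding the three right-hand sides produces six terms, grouped in three pairs according to which element of $\Fg$ acts.

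Then I would use the symmetry $\d(a,b)=\d(b,a)$ to rewrite each pair: for instance $-z\d(x,y)+z\d(y,x)=0$, and similarly for the $x$-action and $y$-action pairs. All six terms cancel, giving
\beqn
\d(x,[y,z])+\d(y,[z,x])+\d(z,[x,y])=0.
\eeqn

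There is no real obstacle here; the argument is the exact analogue of the standard derivation of the Jacobi identity from the Leibniz rule, with symmetry playing the role that skew-symmetry plays in the classical computation. The only point worth noting is that one could equally well start from axiom (1) rather than (2) and reach the same conclusion via the symmetry of $\d$, so the lemma can be viewed as the natural Jacobi-type identity automatically satisfied by any symmetric biderivation.
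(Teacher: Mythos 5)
Your computation is correct and is precisely the ``direct computation'' the paper alludes to without writing out: summing the three cyclic instances of the second biderivation axiom and cancelling the six terms pairwise via the symmetry of $\d$. Nothing further is needed.
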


The following proposition can be deduced from the above lemma.
\begin{prop}\label{P4.2}
Let $\Fg$ be a complex Lie algbera and $V$ be a $\Fg$-module, $c$ be a central element in $\Fg$. Assume that $\d$ is a symmetric biderivation from $\Fg$ to $V$. Then $\d(\Fg^{(1)}, c)=0$.
\end{prop}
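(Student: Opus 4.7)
The plan is to apply Lemma \ref{L4.1} directly, exploiting both the centrality of $c$ and the symmetry of $\d$. Since $\Fg^{(1)} = [\Fg,\Fg]$ is spanned by brackets $[x,y]$ with $x,y\in\Fg$, it suffices to verify that $\d([x,y],c)=0$ for arbitrary $x,y\in\Fg$.

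First I would invoke Lemma \ref{L4.1} with the triple $(c,x,y)$, obtaining
\beqn
\d(c,[x,y])+\d(x,[y,c])+\d(y,[c,x])=0.
\eeqn
Next I would observe that because $c$ is central in $\Fg$, both $[y,c]$ and $[c,x]$ vanish, so bilinearity of $\d$ forces the last two terms to be zero. This collapses the identity to $\d(c,[x,y])=0$. Finally, applying the assumed symmetry of $\d$ gives $\d([x,y],c)=\d(c,[x,y])=0$, and extending bilinearly over $\Fg^{(1)}$ yields $\d(\Fg^{(1)},c)=0$.

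There is essentially no obstacle here; the proposition is a clean corollary of the cyclic identity in Lemma \ref{L4.1}. The only subtlety worth emphasizing in the writeup is that Lemma \ref{L4.1} genuinely requires the symmetry of $\d$ (one cannot derive the same identity for an arbitrary biderivation), so the hypothesis that $\d$ is symmetric is used twice: once implicitly via Lemma \ref{L4.1}, and once more when passing from $\d(c,[x,y])$ to $\d([x,y],c)$.
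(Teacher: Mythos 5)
Your argument is exactly the paper's: apply Lemma \ref{L4.1} to the triple $(c,x,y)$, use centrality of $c$ to kill the terms $\d(x,[y,c])$ and $\d(y,[c,x])$, and conclude $\d(c,[x,y])=0$, hence $\d(\Fg^{(1)},c)=0$ by symmetry and bilinearity. Your added remark that symmetry is used both inside Lemma \ref{L4.1} and in the final flip is a correct and worthwhile clarification, but the proof itself is the same.
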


\begin{proof}
 For any $x,y\in\Fg$, by Lemma \ref{L4.1}, one has
\beqn
\d(c,[x,y])=\d(c,[x,y])+0+0=\d(c,[x,y])+\d(x,[y,c])+\d(y,[c,x])=0.
\eeqn
The proof is completed.
\end{proof}

We call a $\Fg$-module faithful if $Z_V(\Fg)=0$.
The following proposition is easy but useful.
\begin{prop}\label{P4.3}
	Let $\Fg$ be a finite-dimensional complex semisimple Lie algebra, and $W,V,U$  be  $\Fg$-modules with $W,V$ finite-dimensional and $V$ faithful. If a bilinear map $\d:W\times V\longrightarrow U$ satisfies the following conditions
	\begin{itemize}
		\item[(1)] For each $w\in W$, $\d(w,\cdot):V\longrightarrow U$ is a $\Fg$-module homomorphism;
		\item[(2)] For each $v\in V$, $\d(\cdot,v):V\longrightarrow U$ is a $\Fg$-module homomorphism,
	\end{itemize}
	then $\d=0$.
\end{prop}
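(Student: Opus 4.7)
The plan is to exploit the two separate equivariance conditions via a commutator trick, concluding that the image of $\d$ is $\Fg$-annihilated, and then to invoke faithfulness of $V$. First I would rewrite the hypotheses as the identities
\beqn
\d(w, xv) = x\d(w,v), \qquad \d(xw, v) = x\d(w,v),
\eeqn
valid for all $x\in\Fg$, $w\in W$, $v\in V$. The central observation is to compute $\d(xw, yv)$ in two different orders. Applying (2) first and then (1) gives $xy\,\d(w,v)$, while applying (1) first and then (2) gives $yx\,\d(w,v)$. Subtracting yields $[x,y]\d(w,v)=0$ for all $x,y\in\Fg$. Since $\Fg$ is semisimple we have $\Fg=[\Fg,\Fg]$, so $\Fg\cdot\d(W,V)=0$; in other words, the image of $\d$ lies in $Z_U(\Fg)$.

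Next I would apply condition (1) once more: for any $x\in\Fg$ we get $\d(w,xv)=x\d(w,v)=0$, hence $\d(W,\Fg V)=0$. To finish, I would decompose $V$ via Weyl's complete reducibility theorem into a direct sum of irreducible summands. The trivial summands together constitute $Z_V(\Fg)$, which vanishes by the faithfulness hypothesis; each nontrivial irreducible summand $V_i$ satisfies $\Fg V_i = V_i$, because $\Fg V_i$ is a nonzero submodule of the irreducible $V_i$. Therefore $V = \Fg V$, whence $\d(W,V)=\d(W,\Fg V)=0$, i.e.\ $\d=0$.

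There is no real obstacle here; the only point where the two hypotheses genuinely interact is the commutator computation, and everything else is a routine consequence of semisimplicity (used both in the form $\Fg=[\Fg,\Fg]$ and via complete reducibility) together with faithfulness of $V$. Note that the finite-dimensionality of $V$ is needed only to invoke Weyl's theorem, while the finite-dimensionality of $W$ does not actually enter the argument.
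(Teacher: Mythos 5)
Your proof is correct, but it takes a genuinely different route from the paper's. The paper argues via weights: from the two equivariance conditions it gets $\wt(w)(h)\,\d(w,v)=h\,\d(w,v)=\wt(v)(h)\,\d(w,v)$, so $\d(w,v)=0$ whenever the weight vectors $w$ and $v$ have distinct weights; then, for $V$ irreducible and faithful (hence $\dim V>1$), it writes a weight vector $v$ as $\sum_{\a\in\Phi^+}(e_\a v_\a+f_\a v_\a')$, pulls the $e_\a,f_\a$ out through condition (1), and lands on terms $\d(w,v_\a)$ with mismatched weights; the general case follows by decomposing $V$ via Weyl's theorem. Your commutator computation $\d(xw,yv)=xy\,\d(w,v)=yx\,\d(w,v)$, which gives $[\Fg,\Fg]\,\d(W,V)=0$ and hence $\Fg\,\d(W,V)=0$ by perfectness of $\Fg$, replaces all of this weight bookkeeping; combined with $\Fg V=V$ (which is exactly where faithfulness in the paper's sense, $Z_V(\Fg)=0$, and Weyl's theorem enter), it yields $\d(W,V)=\d(W,\Fg V)\subseteq\Fg\,\d(W,V)=0$. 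Your version is also marginally more general: it never uses the finite-dimensionality of $W$ (the paper's argument needs $W$ to be spanned by weight vectors), and it isolates the only property of $V$ actually required, namely $\Fg V=V$. Both proofs are complete; yours is arguably the cleaner of the two.
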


\begin{proof}
	We first assume that $V$ is irreducible. Then $\dim V>1$ since $V$ is a faithful $\Fg$-module. Let $w\in W$ and $v\in V$ be two weight vectors. If $\wt(w)\neq \wt(v)$, then $\d(w,v)=0$.  Assume that $\wt(w)=\wt (v)$. We can find weight vectors (or $0$) $v_\a,v_\a',\a\in\Phi^+$ such that   
	\beqn
	v=\sum_{\a\in \Phi^+}(e_\a v_\a+f_\a v_\a').
	\eeqn
	We have
	\beqn
	\d(w,v)=\sum_{\a\in \Phi^+}(\d(w,e_\a v_\a)+\d(w,f_\a v_\a'))=\sum_{\a\in \Phi^+}(e_\a\d(w, v_\a)+f_\a\d(w, v_\a'))=0.
	\eeqn
	It follows that $\d=0$.
	
	\medskip
	
	Assume that $V$ is reducible. Then by Weyl's theorem, $V$ can be written as a direct sum of irreducible $\Fg$-submodules
	\beqn
	V=V_1\oplus V_2\oplus \cdots\oplus V_m.
	\eeqn
	Since $V$ is faithful, $V_i$ is faithful for each $1\le i\le m$. By the above discussion, 
	\beqn
	\d(W,V_i)=0,~i=1,2,\cdots, m.
	\eeqn
	Hence,
	\beqn
	\d(W,V)=\d(W,V_1\oplus V_2\oplus \cdots\oplus V_m)=\d(W,V_1)+\d(W,V_2)+\cdots+\d(W,V_m)=0.
	\eeqn
	That is $\d=0$.
\end{proof}

We can prove the following two theorems.

\begin{thm}\label{T4.4}
	Let $\Fg$ be a finite-dimensional complex  Lie algbera containing a semisimple Lie algbera $\Fg_0$, $V$ a finite-dimensional $\Fg$-module. If  $\Fg$ is faithful as a $\Fg_0$-module,
	 then all the symmetric biderivations from $\Fg$ to $V$ are trival. 
\end{thm}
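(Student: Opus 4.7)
The plan is to combine Corollary \ref{C3.4} with Proposition \ref{P4.3}, exploiting the hypothesis that $\Fg_0$ acts faithfully on $\Fg$ via the adjoint representation. Let $\d:\Fg\times\Fg\longrightarrow V$ be an arbitrary symmetric biderivation; I want to show $\d=0$.

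First, I would apply Corollary \ref{C3.4} directly to $\d$; this immediately gives $\d(\Fg_0,\Fg)=0$, and by the symmetry of $\d$ also $\d(\Fg,\Fg_0)=0$. Thus only the values $\d(y,z)$ with $y,z$ allowed to range over all of $\Fg$ remain to be controlled.

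Next, for any $x\in\Fg_0$ and $y,z\in\Fg$, substituting $\d(x,z)=0$ into the first biderivation identity yields $\d([x,y],z)=x\d(y,z)$, and substituting $\d(y,x)=\d(x,y)=0$ into the second yields $\d(y,[x,z])=x\d(y,z)$. Regarding $\Fg$ as a $\Fg_0$-module through the adjoint action, these two identities assert precisely that for each fixed $z\in\Fg$ the map $y\mapsto \d(y,z)$ is a $\Fg_0$-module homomorphism $\Fg\longrightarrow V$, and for each fixed $y\in\Fg$ the map $z\mapsto \d(y,z)$ is a $\Fg_0$-module homomorphism $\Fg\longrightarrow V$.

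Finally, I would invoke Proposition \ref{P4.3} with $\Fg_0$ playing the role of the ambient semisimple Lie algebra, with both $W$ and $V$ of that proposition taken to be $\Fg$ (carrying the adjoint $\Fg_0$-action), and with the target $U$ equal to our module $V$. The faithfulness hypothesis required by Proposition \ref{P4.3} is exactly the assumption that $\Fg$ is a faithful $\Fg_0$-module, and the two module-homomorphism conditions have just been verified. The proposition then concludes $\d=0$. Since the representation-theoretic heavy lifting is already packaged inside Theorem \ref{T3.1} and Proposition \ref{P4.3}, there is no substantial obstacle left; the only point that needs bookkeeping is keeping straight that the $\Fg_0$-module structure on the two copies of $\Fg$ is the adjoint one, while the structure on $V$ is the original $\Fg$-module action restricted to $\Fg_0$.
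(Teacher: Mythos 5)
Your proposal is correct and follows essentially the same route as the paper: apply Corollary \ref{C3.4} to get $\d(\Fg_0,\Fg)=0$, use this together with symmetry in the two biderivation identities to show that $\d(\cdot,z)$ and $\d(y,\cdot)$ are $\Fg_0$-module homomorphisms for the adjoint action, and then invoke Proposition \ref{P4.3} with the faithfulness hypothesis. Your explicit bookkeeping of which module structures are in play is, if anything, slightly cleaner than the paper's write-up.
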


\begin{proof}
	By Corollary \ref{C3.4}, $\d(\Fg_0,\Fg)=0$. Let $x\in\Fg$. For any $y\in\Fg_0$ and $z\in\Fg$, one has
	\begin{align*}
	\d(x,[y,z])&=y\d(x,z)-z\d(x,y)=y\d(x,z),\\
	\d([y,z],x)&=y\d(z,x)-z\d(y,x)=y\d(x,z).
	\end{align*}
	Hence, $\d(\cdot,x)$ and $\d(x,\cdot)$ are both $\Fg_0$-module homomorphisms from $\Fg$ to $V$. Since, $\Fg$ is faithful as a $\Fg_0$-module, we have
	$\d=0$ by Proposition \ref{P4.3}.
\end{proof}


\begin{thm}\label{T4.5}
	Let $\d$ be a symmetric biderivation from $\Fg$ to $V$.
	Let $\Fg$ be a finite-dimensional complex perfect Lie algbera containing a semisimple Lie algbera $\Fg_0$, $V$ a finite-dimensional $\Fg$-module. If the center of  $\Fg$ equals  
	\beqn
	Z_\Fg(\Fg_0)=\{ x\in \Fg\mid [y,x]=0~\text{for all~} y\in \Fg\},
	\eeqn
	then all the symmetric biderivations from $\Fg$ to $V$ are trival. 
\end{thm}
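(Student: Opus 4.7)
The plan is to chain together Corollary~\ref{C3.4}, Proposition~\ref{P4.2}, and Proposition~\ref{P4.3} via the $\Fg_0$-isotypic decomposition of $\Fg$. Let $\d$ be a symmetric biderivation from $\Fg$ to $V$ and write $Z:=Z_\Fg(\Fg_0)$; by hypothesis $Z=Z(\Fg)$.

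First, I would apply Corollary~\ref{C3.4} to the semisimple subalgebra $\Fg_0$, obtaining $\d(\Fg_0,\Fg)=0$ and, by symmetry of $\d$, $\d(\Fg,\Fg_0)=0$. Since $\Fg$ is perfect we have $\Fg^{(1)}=\Fg$, so Proposition~\ref{P4.2} yields $\d(\Fg,c)=0$ for every central $c$, and by symmetry $\d(c,\Fg)=0$. The assumption $Z(\Fg)=Z$ therefore promotes this to $\d(Z,\Fg)=\d(\Fg,Z)=0$.

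Second, exactly as in the proof of Theorem~\ref{T4.4}, the biderivation axioms combined with $\d(\Fg,\Fg_0)=\d(\Fg_0,\Fg)=0$ force, for each $x\in\Fg$, both $\d(x,\cdot)$ and $\d(\cdot,x)$ to be $\Fg_0$-module homomorphisms from $\Fg$ (with the adjoint action) to $V$.

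Finally, I would decompose $\Fg=Z\oplus\mathcal{N}$ as $\Fg_0$-modules via Weyl's theorem, where $Z$ is the trivial isotypic component (which equals $Z_\Fg(\Fg_0)$, since the centralizer of $\Fg_0$ is precisely the space of $\Fg_0$-invariants) and $\mathcal{N}$ is the sum of the non-trivial irreducible $\Fg_0$-summands. Because $Z(\Fg_0)=0$, one has $\Fg_0\cap Z=0$ and $\Fg_0\subseteq\mathcal{N}$; since the adjoint action of $\Fg_0$ on itself is faithful, $\mathcal{N}$ is a faithful $\Fg_0$-module. By the first step $\d$ already vanishes on $(Z\times\Fg)\cup(\Fg\times Z)$, so it suffices to prove $\d|_{\mathcal{N}\times\mathcal{N}}=0$. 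The second step says this restriction meets the hypotheses of Proposition~\ref{P4.3} with the faithful module $\mathcal{N}$, and it follows that $\d=0$.

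The main subtlety, and really the only obstacle I anticipate, is recognizing that $Z_\Fg(\Fg_0)$ is exactly the trivial $\Fg_0$-isotypic component of $\Fg$: this is what makes the hypothesis $Z(\Fg)=Z_\Fg(\Fg_0)$ dovetail with Proposition~\ref{P4.2}, covering precisely the part of $\Fg$ where $\Fg_0$ fails to act faithfully and where Proposition~\ref{P4.3} alone would have no purchase. Everything else is a straightforward unpacking of the biderivation identities.
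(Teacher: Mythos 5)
Your proposal is correct and follows essentially the same route as the paper: decompose $\Fg$ as a $\Fg_0$-module into $Z_\Fg(\Fg_0)$ plus a faithful complement, kill $\d(\Fg_0,\Fg)$ via Corollary~\ref{C3.4}, use the biderivation identities to upgrade $\d$ to a pair of $\Fg_0$-module homomorphisms and apply Proposition~\ref{P4.3} on the faithful part, and handle the central part with Proposition~\ref{P4.2} and perfectness. The only cosmetic difference is that the paper applies Proposition~\ref{P4.3} to $\d|_{\Fg\times W}$ directly rather than to $\mathcal{N}\times\mathcal{N}$.
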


\begin{proof}
	Let $\d$ be a symmetric biderivation from $\Fg$ to $V$.
	As a $\Fg_0$-module, $\Fg$ can be decomposed as
	\beqn
	\Fg=W \oplus Z_\Fg(\Fg_0),
	\eeqn
	where $W$ is a faithful $\Fg_0$-submodule of $\Fg$. By Corollary \ref{C3.4}, $\d(\Fg_0,\Fg)=0$. 
	
	\medskip
	
	Let $x\in\Fg$ and $w\in W$. For any $y\in\Fg_0$, one has
	\begin{align*}
		\d(x,[y,w])&=y\d(x,w)-w\d(x,y)=y\d(x,z),\\
		\d([y,x],w)&=y\d(x,w)-x\d(y,w)=x\d(w,v).
	\end{align*}
	Hence, $\d(x,\cdot):W\longrightarrow V$ and $\d(\cdot,w):\Fg\longrightarrow V$ are both $\Fg_0$-module homomorphisms. Since, $W$ is a faithful  $\Fg_0$-module, by Proposition 4.3, we have
	\beq
	\d(x,w)=0,~\forall x\in \Fg,w\in W.
	\eeq
	
	Since $\Fg$ is perfect, by Proposition \ref{P4.2}, 
	\beqn
	\d(\Fg,Z_\Fg(\Fg_0) )=\d(\Fg^{(1)},Z_\Fg(\Fg_0))=0.
	\eeqn
	The proof is completed.
\end{proof}

\subsection{Reductive Lie algebras}

A reductive Lie algebra is a finite-dimensional central extension  of certain semisimple Lie algebras. In the following, we just consider the simplest case. 
We remark here that the   discussion can be easily applied to determine  the symmetric biderivations  of a reductive Lie algebra. We leave it to the interested readers.

\medskip

Let $\Fg$ be the $1$-dimensional central extension of a semisimple Lie algebra $\Fg_{0}$, i.e. $\Fg=\Fg_{0}\oplus \CC c$ with $[c,\Fg]=0$.   We will determine the symmetric biderivations of  $\Fg$.

\medskip

Let $\d$ be a symmetric biderivation of $\Fg$. Then by Theorem \ref{T3.1} and Proposition \ref{P4.2}, we have 
\beqn
\d(\Fg_0,\Fg_0)=0,~\d(\Fg^{(1)},c)=\d(\Fg_0,c)=0.
\eeqn
Since $\d(\cdot,c)$ is a derivation of $\Fg$, there exists some $\la\in\CC$ such that 
\beqn
\d(c,c)=\la c.
\eeqn
Conversely, for each $\la\in \CC$, the formula
\beq
\d(x+\mu_1 c,y+\mu_2 c)=\mu_1\mu_2\la c,~\forall x,y\in\Fg_0,\mu_1,\mu_2\in\CC
\eeq
indeed determines a symmetric biderivation of $\Fg$. 
To summarize,  we obtain
\begin{thm}
Let $\Fg=\Fg_0\oplus \CC c$ be a reductive Lie algbera, where $\Fg_0$ is semisimple. Then each symmetric biderivation $\d$ of $\Fg$ is of the form
\beqn
\d(x+\mu_1c,y+\mu_1c)=\la\mu_1\mu_2 c,~\forall x,y\in \Fg_0,\mu_1,\mu_2\in\CC,
\eeqn
for  a   complex number $\la$. 
\end{thm}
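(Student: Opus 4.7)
The plan is to use the direct sum decomposition $\Fg = \Fg_0 \oplus \CC c$ to reduce the problem to determining $\d$ on the three pieces $\Fg_0 \times \Fg_0$, $\Fg_0 \times \CC c$, and $\CC c \times \CC c$ separately, and then to verify that the resulting one-parameter family of candidate maps actually consists of symmetric biderivations.

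First, viewing $\Fg$ as a finite-dimensional $\Fg_0$-module via the adjoint action, the restriction $\d|_{\Fg_0 \times \Fg_0}$ is a symmetric biderivation from the semisimple Lie algebra $\Fg_0$ into a finite-dimensional $\Fg_0$-module, so Theorem \ref{T3.1} forces it to vanish. Since $\Fg_0$ is perfect and $c$ is central, we have $\Fg^{(1)} = \Fg_0$, hence Proposition \ref{P4.2} yields $\d(\Fg_0, c) = 0$, and symmetry takes care of $\d(c, \Fg_0)$.

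Second, I would pin down $\d(c,c)$ by exploiting the fact that $\d(\cdot, c): \Fg \to \Fg$ is a derivation that already vanishes on $\Fg_0$. Applying the derivation identity to the bracket $[c,y] = 0$ for arbitrary $y \in \Fg_0$ gives $0 = c\cdot\d(y,c) - y\cdot\d(c,c) = -y\cdot\d(c,c)$, so $\d(c,c)$ lies in the subspace of $\Fg_0$-invariants of $\Fg$. Because $\Fg_0$ is semisimple (and therefore centerless), this invariant subspace is precisely $\CC c$, so $\d(c,c) = \la c$ for some $\la \in \CC$. Extending bilinearly and symmetrically, this forces $\d$ to have the stated form.

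Finally, I would verify the converse: for every $\la \in \CC$, the bilinear map $\d_\la(x + \mu_1 c, y + \mu_2 c) = \la \mu_1 \mu_2 c$ is a symmetric biderivation. Symmetry is manifest, and both biderivation identities reduce to checking that the two sides agree, which is immediate since the image lies in the center $\CC c$ (so every action $z \cdot \d_\la(\cdot,\cdot)$ vanishes) while $[\Fg,\Fg] \subseteq \Fg_0$ is exactly the subspace on which $\d_\la$ vanishes in its first slot. Given the preparatory results, no genuine obstacle arises; the one point that requires a small verification is the identification $\{v \in \Fg : \Fg_0 \cdot v = 0\} = \CC c$, which hinges on the centerlessness of the semisimple summand $\Fg_0$.
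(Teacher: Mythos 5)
Your proposal is correct and follows essentially the same route as the paper: Theorem \ref{T3.1} kills $\d|_{\Fg_0\times\Fg_0}$, Proposition \ref{P4.2} with $\Fg^{(1)}=\Fg_0$ kills $\d(\Fg_0,c)$, and the derivation property of $\d(\cdot,c)$ forces $\d(c,c)\in\CC c$. Your explicit computation showing $\d(c,c)$ lies in the $\Fg_0$-invariants (hence in $\CC c$ by centerlessness of $\Fg_0$) just makes precise the paper's terser appeal to $\d(\cdot,c)$ being a derivation of $\Fg$, and your verification of the converse matches the paper's unproved assertion.
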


\subsection{The  Takiff algebras}

Let $\Fg$ be a finite-dimensional complex semisimple Lie algebra, the Takiff algebra \cite{Lau24}  associated $\Fg$ is, by definition,  
$\tilde{\Fg}=\Fg\otimes \CC[t]/(t^2)$.  Let $V=\Fg\otimes \o{t}$. Then $V$ is an ideal of $\tilde{\Fg}$. As a $\Fg$-module, $V$ is isomorphic to the adjoint representation. Note that $\tilde{\Fg}=\Fg\ltimes V$ with bracket 
\beqn
[x+v,y+w]=[x,y]+xw-yv,~\forall x,y\in \Fg, v,w\in V.
\eeqn
Here,
\beqn
x(y\otimes \o{t})=[xy]\otimes\o{t},~\forall y\in\Fg.
\eeqn
We can obtain the following theorem.

\begin{thm}
All the symmetric biderivations of $\tilde{\Fg}$ are trival.
\end{thm}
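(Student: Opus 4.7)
The plan is to recognize that this theorem is an immediate application of Theorem \ref{T4.4}. I would take the ambient Lie algebra in Theorem \ref{T4.4} to be $\tilde{\Fg}$, and its semisimple subalgebra $\Fg_0$ to be the embedded copy of $\Fg$ inside $\tilde{\Fg}=\Fg\oplus V$. A symmetric biderivation of $\tilde{\Fg}$ is by convention one from $\tilde{\Fg}$ into its adjoint representation, which is finite-dimensional since $\Fg$ is. The only hypothesis left to verify is therefore that $\tilde{\Fg}$, viewed as a $\Fg$-module under the adjoint action, is faithful.

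To check faithfulness, I would first record the $\Fg$-module structure of $\tilde{\Fg}$. Under the adjoint action of $\Fg\subset\tilde{\Fg}$ the decomposition $\tilde{\Fg}=\Fg\oplus V$ is a decomposition into $\Fg$-submodules, with $\Fg$ acting on itself by the adjoint representation and on $V=\Fg\otimes\o{t}$ via $x\cdot(y\otimes\o{t})=[x,y]\otimes\o{t}$. The map $y\otimes\o{t}\mapsto y$ is thus a $\Fg$-module isomorphism between $V$ and the adjoint representation of $\Fg$. Since $\Fg$ is semisimple, $Z(\Fg)=0$, so the adjoint representation is faithful; hence both summands, and therefore $\tilde{\Fg}$ itself, are faithful as $\Fg$-modules.

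With these observations in place, Theorem \ref{T4.4} applies directly and asserts that every symmetric biderivation from $\tilde{\Fg}$ to any finite-dimensional $\tilde{\Fg}$-module is trivial; specializing to the adjoint module gives the desired conclusion. There is essentially no obstacle here: the substantive work has already been carried out in the general Theorems \ref{T3.1} and \ref{T4.4}, and the role of this application is merely to observe that the Takiff algebra is set up exactly to fit the hypotheses of the latter. (Alternatively, one could apply Theorem \ref{T4.5}: $\tilde{\Fg}$ is perfect because $[\tilde{\Fg},\tilde{\Fg}]\supseteq[\Fg,\Fg]+[\Fg,V]=\Fg+V=\tilde{\Fg}$, and $Z_{\tilde{\Fg}}(\Fg)=0$ for the same semisimplicity reason, so the hypothesis $Z(\tilde{\Fg})=Z_{\tilde{\Fg}}(\Fg)$ is trivially satisfied.)
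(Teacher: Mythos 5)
Your proof is correct and takes essentially the same route as the paper: both apply Theorem \ref{T4.4} with $\Fg_0=\Fg$ after observing that $\tilde{\Fg}$ is a direct sum of two copies of the adjoint representation, hence a faithful $\Fg$-module because $Z(\Fg)=0$. Your extra remark on the alternative via Theorem \ref{T4.5} is also valid but not needed.
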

\begin{proof}
Let $\d$ be a symmetric biderivation of $\tilde{\Fg}$. 
Note that $\tilde{\Fg}$ is isomorphic to $V\oplus V$ as a $\Fg$-module, which is faithful. Hence, by Theorem \ref{T4.4}, $\d=0$.
\end{proof}
\subsection{The symplectic oscillator algebras}

Following \cite{Liu21}, we  recall the symplectic oscillator algebra $\Fg_n$. Recall that the symplectic complex Lie algebra $\Fsp_n(\CC)$ can be realized the matrix Lie algbera consisting all the $2n\times 2n$ complex matrices with block form
\beqn
\begin{pmatrix}
A & B\\
C & -A^\T
\end{pmatrix},
\eeqn
where $A,B,C$ are $n\times n$ matrices with $B=B^\T$ and $C=C^\T$.
It is well known that $\Fsp_{2n}$ has the natural representation on $\CC^{2n}$ by the left matrix multiplication. 
Note that $\CC^{2n}$ is an irreducible $\Fsp_{2n}(\CC)$-module via the natural action.

\medskip

The Heisenberg Lie algbera $H_n=\CC^{2n}\oplus \CC z$ is the Lie algbera with bracket given by
\beqn
[e_i,e_{n+i}]=z,~[z,H_n]=0.
\eeqn 
The symplectic oscillator algebra  $\Fg_n$ is the semidirect product Lie algbera 
\beqn
\Fg_n=\Fsp_{2n}(\CC)\ltimes H_n
\eeqn 
with bracket 
\beqn
[x,v]=xv,~[x,z]=0
\eeqn
for all $x\in\Fsp_{2n}(\CC),~v\in\CC^{2n}$.  It is easy to see that $\Fg_n$ is perfect and 
\beqn
Z_{\Fg_n}({\Fsp_{2n}(\CC)})=\CC z.
\eeqn
By Theorem \ref{T4.5}, we obtain the following theorem immediately 

\begin{thm}
All the symmetric biderivations of $\Fg_n$ are trival.
\end{thm}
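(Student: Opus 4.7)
The plan is to apply Theorem \ref{T4.5} directly, with $\Fg=\Fg_n$ and semisimple subalgebra $\Fg_0=\Fsp_{2n}(\CC)$. Beyond the finite-dimensionality, Theorem \ref{T4.5} requires three ingredients: (a) $\Fg_0$ is semisimple, which is immediate; (b) $\Fg_n$ is perfect; (c) the center of $\Fg_n$ coincides with the centralizer $Z_{\Fg_n}(\Fsp_{2n}(\CC))$. Once these are checked for any finite-dimensional $\Fg_n$-module $V$, Theorem \ref{T4.5} forces every symmetric biderivation $\d:\Fg_n\times\Fg_n\to V$ to vanish, and in particular the adjoint case $V=\Fg_n$ gives the stated result.

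To establish perfectness I would split $\Fg_n$ into its three natural pieces. Since $\Fsp_{2n}(\CC)$ is semisimple, $\Fsp_{2n}(\CC)=[\Fsp_{2n}(\CC),\Fsp_{2n}(\CC)]\subset[\Fg_n,\Fg_n]$. The natural module $\CC^{2n}$ is irreducible and nontrivial under $\Fsp_{2n}(\CC)$, so $\Fsp_{2n}(\CC)\cdot\CC^{2n}$ is a nonzero submodule, hence all of $\CC^{2n}$, placing $\CC^{2n}\subset[\Fg_n,\Fg_n]$. Finally, the Heisenberg relation gives $z=[e_1,e_{n+1}]\in[\Fg_n,\Fg_n]$. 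Thus $\Fg_n=[\Fg_n,\Fg_n]$.

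For the center, $\CC z$ is obviously central by the defining bracket relations $[z,\Fg_n]=0$. Conversely, any central element $x+v+\mu z$ with $x\in\Fsp_{2n}(\CC)$, $v\in\CC^{2n}$, $\mu\in\CC$ must commute with every $y\in\Fsp_{2n}(\CC)$, yielding $[y,x]=0$ and $yv=0$ for all such $y$. Semisimplicity of $\Fsp_{2n}(\CC)$ forces $x=0$, and irreducibility plus nontriviality of $\CC^{2n}$ forces $v=0$. Hence the center of $\Fg_n$ equals $\CC z$, and the paper has already recorded $Z_{\Fg_n}(\Fsp_{2n}(\CC))=\CC z$, so condition (c) holds.

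With (a), (b), (c) in hand, Theorem \ref{T4.5} applies and the proof is complete. There is no real obstacle: all the conceptual machinery was built into Theorem \ref{T4.5}, and what remains is structural bookkeeping for $\Fg_n$. The only point that required a small argument rather than a one-line citation was verifying that the Heisenberg nilradical does not enlarge the center beyond $\CC z$, but this is handled cleanly by the irreducibility of $\CC^{2n}$ as an $\Fsp_{2n}(\CC)$-module.
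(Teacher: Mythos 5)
Your proposal is correct and follows exactly the route the paper takes: verify that $\Fg_n$ is perfect and that its center coincides with $Z_{\Fg_n}(\Fsp_{2n}(\CC))=\CC z$, then invoke Theorem \ref{T4.5}. The only difference is that you spell out the verifications (via irreducibility of the natural module and the Heisenberg relation) which the paper leaves as ``easy to see.''
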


\subsection{The Schr$\ddot{\boldsymbol{\mathrm{o}}}$dinger algebras} Let us recall the definition of the Schr$\ddot{\mathrm{o}}$dinger algebra $\S_n$ in $(n+1)$-dimensional space-time following \cite{LLW}. The Schr$\ddot{\mathrm{o}}$dinger algebra $\S_n$ is a finite-dimensional, non-semisimple and perfect complex Lie algbera, and it is the semidirect product Lie algebra
\beqn
\S_n=(\Fsl_2(\CC)\oplus\Fsp_{2n}(\CC))\ltimes H_n,
\eeqn
where $H_n=\CC^{2n}\oplus\CC z$ is the Heisenberg Lie algbera. The element $z$ is central in $\S_n$. Let $\Fg_0=\Fsl_2(\CC)\oplus\Fsp_{2n}(\CC)$. Then $\Fg_0$ is a semisimple subalgebra of $\S_n$ and $Z_{\S_n}(\Fg_0)=\CC z$ is the center of $\S_n$. It is obviously that $\S_n$ is perfect. By Theorem \ref{T4.5}, we obtain  

\begin{thm}
	All the symmetric biderivations of $\S_n$ are trival.
\end{thm}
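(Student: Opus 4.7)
The plan is to apply Theorem \ref{T4.5} directly, with $\Fg = \S_n$ and the distinguished semisimple subalgebra $\Fg_0 = \Fsl_2(\CC)\oplus \Fsp_{2n}(\CC)$, taking $V = \S_n$ as a finite-dimensional $\S_n$-module via the adjoint action. The paragraph just before the statement already flags the three ingredients we need, so the proof is essentially a verification that the hypotheses of Theorem \ref{T4.5} all hold in this setting.

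The verifications I would record, in order, are as follows. First, $\Fg_0$ is semisimple as a direct sum of two simple Lie algebras. Second, $\S_n$ is perfect: $[\Fg_0,\Fg_0]=\Fg_0$ by semisimplicity of $\Fg_0$; the action of $\Fsp_{2n}(\CC)\subset\Fg_0$ on $\CC^{2n}$ is nontrivial and irreducible, so $[\Fg_0,\CC^{2n}]=\CC^{2n}$; and $z=[e_i,e_{n+i}]\in[H_n,H_n]$. These brackets together exhaust $\S_n$. Third, and most substantively, the center $Z(\S_n)$ coincides with $Z_{\S_n}(\Fg_0)=\CC z$: the inclusion $Z(\S_n)\subseteq Z_{\S_n}(\Fg_0)$ is automatic, the element $z$ is central in $\S_n$ by construction, and conversely $Z_{\S_n}(\Fg_0)=\CC z$ follows by decomposing $\S_n$ as a $\Fg_0$-module and noting that the isotypic component of the trivial representation is $1$-dimensional (the summands $\Fsl_2(\CC)$, $\Fsp_{2n}(\CC)$, $\CC^{2n}$ of $\S_n/\CC z$ are all nontrivial irreducible $\Fg_0$-modules).

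Once these three facts are recorded, Theorem \ref{T4.5} applies verbatim and gives $\d=0$ for every symmetric biderivation $\d:\S_n\times\S_n\longrightarrow \S_n$. The only point requiring any real thought is the identification $Z_{\S_n}(\Fg_0)=\CC z$, which is a short weight/module-theoretic argument about how $\Fsl_2(\CC)$ and $\Fsp_{2n}(\CC)$ act on the Heisenberg part $H_n$. Beyond that, everything is bookkeeping, since the heavy lifting has already been absorbed into Theorems \ref{T3.1} and \ref{T4.5}.
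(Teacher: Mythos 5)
Your proposal is correct and follows exactly the paper's route: the paper likewise deduces the theorem by checking that $\Fg_0=\Fsl_2(\CC)\oplus\Fsp_{2n}(\CC)$ is semisimple, that $\S_n$ is perfect, and that $Z_{\S_n}(\Fg_0)=\CC z$ is the center, and then invoking Theorem \ref{T4.5}. You simply spell out these verifications in more detail than the paper does.
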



\subsection{Lie superalgebras}

Let $\Fg=\Fg_{\o{0}}\oplus\Fg_{\o{1}}$ be a (complex) Lie superalgebra. A homogeneous  bilinear map $\d:\Fg\times\Fg\longrightarrow \Fg$ is said to be a super-biderivation of $\Fg$ if it satisfies 
\begin{align}
&\d([x,y],z)=(-1)^{|\d||x|}[x,\d(y,z)]-(-1)^{|y|(|\d|+|x|)}[y,\d(x,z)],\\
&\d(x,[y,z])=(-1)^{(|\d|+|x|)|y|}[y,\d(x,z)]-(-1)^{|z|(|\d|+|x|+|y|)}[z,\d(x,y)]
\end{align}
for all homogeneous elements $x,y,z\in\Fg$. See \cite{D23,Y18} for details.
Note that $\d|_{\Fg_{\o{0}}\times\Fg_{\o{0}}}$ is a biderivation from $\Fg_{\o{0}}$ to $\Fg$ (viewed as a $\Fg_{\o{0}}$-module via the adjoint action).

\medskip

A super-biderivation $\d$ is said to be super-symmetric, if
\beqn
\d(x,y)=(-1)^{|x||y|}\d(y,x),~\forall x,y\in\Fg.
\eeqn
We have the following theorem.

\begin{thm}
Let $\Fg=\Fg_{\o{0}}\oplus\Fg_{\o{1}}$ be a perfect  finite-dimensional complex Lie superalgebra with $\Fg_{\o{0}}$ reductive. Let $\Fg_0=[\Fg_{\o{0}}, \Fg_{\o{0}}]$. Assume that the dimension of 
\beqn
Z_{\Fg_{\o{1}}}(\Fg_0)=\{v\in\Fg_{\o{1}}\mid [x,v]=0~\text{for all}~x\in\Fg_0\}
\eeqn
is at most one.
Then all the super-symmetric biderivations of $\Fg$ are trival.
\end{thm}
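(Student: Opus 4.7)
I would prove the theorem in three progressive stages, using $\Fg_{\o{0}}=\Fg_0\oplus Z(\Fg_{\o{0}})$ (reductivity) and setting $C:=Z_{\Fg_{\o{1}}}(\Fg_0)$, so $\dim C\le 1$ by hypothesis. Fix a super-symmetric super-biderivation $\d$, which is homogeneous by definition. First, restrict $\d$ to $\Fg_{\o{0}}\times\Fg_{\o{0}}$: this is an ordinary symmetric biderivation from the reductive Lie algebra $\Fg_{\o{0}}$ to the finite-dimensional $\Fg_{\o{0}}$-module $\Fg$, so Corollary \ref{C3.4} forces $\d(\Fg_0,\Fg_{\o{0}})=0$. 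To enlarge this to the odd part, I would specialize the super-biderivation identities to $x,y\in\Fg_0$ and $v\in\Fg_{\o{1}}$; the vanishing $\d(x,y)=0$ collapses the second-slot identity to $\d(x,[y,v])=[y,\d(x,v)]$, showing that $\d(x,\cdot):\Fg_{\o{1}}\to\Fg$ is a $\Fg_0$-module homomorphism, while the first-slot identity says $\d(\cdot,v):\Fg_0\to\Fg$ is a derivation. Proposition \ref{P3.1} (applied with the semisimple $\Fg_0$, module $V=\Fg_{\o{1}}$, target $W=\Fg$) then gives $\d(\Fg_0,\Fg_{\o{1}})=0$, and super-symmetry upgrades this to $\d(\Fg_0,\Fg)=\d(\Fg,\Fg_0)=0$.

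Next I would handle the odd-odd part. By Weyl's theorem, decompose $\Fg_{\o{1}}=W\oplus C$ as $\Fg_0$-modules with $W$ the sum of non-trivial simple summands (hence faithful in the paper's sense). Using the vanishing from the previous stage to kill terms involving $\d(\cdot,\Fg_0)$, for each $u\in\Fg_{\o{1}}$ and $w\in W$ both $\d(u,\cdot):\Fg_{\o{1}}\to\Fg$ and $\d(\cdot,w):\Fg_{\o{1}}\to\Fg$ are $\Fg_0$-module homomorphisms. Proposition \ref{P4.3} applied to $\d|_{\Fg_{\o{1}}\times W}$ then gives $\d(\Fg_{\o{1}},W)=0$, and super-symmetry implies $\d(W,\Fg_{\o{1}})=0$. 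The only remaining piece is $\d(C,C)$: if $C=\CC\xi$ then super-symmetry forces $\d(\xi,\xi)=(-1)^{1\cdot 1}\d(\xi,\xi)=-\d(\xi,\xi)$, hence $\d(\xi,\xi)=0$. This is precisely where the hypothesis $\dim C\le 1$ is essential.

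Finally, perfectness of $\Fg$ gives $\Fg_{\o{0}}=[\Fg,\Fg]_{\o{0}}=\Fg_0+[\Fg_{\o{1}},\Fg_{\o{1}}]$, so every $c\in Z(\Fg_{\o{0}})$ has the form $c=c_0+\sum_i[v_i,w_i]$ with $c_0\in\Fg_0$ and $v_i,w_i\in\Fg_{\o{1}}$. Expanding $\d(c,x)=\d(c_0,x)+\sum_i\d([v_i,w_i],x)$ by the super-biderivation first-slot identity produces terms $[v_i,\d(w_i,x)]$ and $[w_i,\d(v_i,x)]$ (up to signs), all of which vanish by the earlier stages when $x\in\Fg_0\cup\Fg_{\o{1}}$; the remaining case $x\in Z(\Fg_{\o{0}})$ reduces to these by the same expansion applied to $x$. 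Combining everything yields $\d\equiv 0$.

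The main bookkeeping burden is tracking the signs in the super-biderivation identities as $|\d|$ ranges over $\{0,1\}$; in every reduction above the potentially sign-carrying term is one already known to vanish, so the signs never obstruct the conclusion. The substantive structural point is the decomposition of $\Fg_{\o{1}}$ in the second stage: the trivial isotypic part $C$ blocks direct application of Proposition \ref{P4.3}, and one must instead exploit that super-symmetry of $\d$ forces $\d(\xi,\xi)=-\d(\xi,\xi)=0$ on odd vectors, which is exactly why the bound $\dim C\le 1$ is assumed.
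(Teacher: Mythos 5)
Your proof is correct and follows essentially the same four-stage route as the paper: kill $\d$ on $\Fg_0\times\Fg_{\o{0}}$ via the even-part results, then on $\Fg_0\times\Fg_{\o{1}}$ via Proposition \ref{P3.1}, then on $\Fg_{\o{1}}\times\Fg_{\o{1}}$ via Proposition \ref{P4.3} together with the super-symmetry identity $\d(\xi,\xi)=-\d(\xi,\xi)=0$ on the at most one-dimensional trivial component, and finally on the centre of $\Fg_{\o{0}}$ using perfectness. The differences are cosmetic (you use Corollary \ref{C3.4} where the paper uses Theorem \ref{T3.1} plus Proposition \ref{P4.2}, and you split $\Fg_{\o{1}}=W\oplus C$ rather than into all irreducibles); note that you correctly cite Proposition \ref{P3.1} for the mixed even--odd step, where the paper's text cites Proposition \ref{P4.3}.
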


\begin{proof}
 Note that $\Fg_0$ is semisimple. Let $\Phi$ be the root system of $\Fg_0$ and $\Phi^+$ the set of positive roots.  Both $\Fg$ and $\Fg_{\o{1}}$ can be viewed as $\Fg_0$-modules via the adjoint action.
Assume that $\d$ is a super-symmetric biderivation of $\Fg$. By Theorem \ref{T3.1}, we have
\beq
\d(x,y)=0, ~\forall x,y\in\Fg_0.
\eeq
Since $\Fg_0=[\Fg_{\o{0}}, \Fg_{\o{0}}]$, by Proposition \ref{P4.2}, one has 
\beq
\d(\Fg_0,c)=\d([\Fg_{\o{0}}, \Fg_{\o{0}}],c)=0
\eeq
for any central element $c$ of $\Fg_{\o{0}}$.

\medskip

Let $x\in\Fg_0$. For any $y\in\Fg_{0}$ and $v\in\Fg_{\o{1}}$, one has
\beqn
\d(x,[y,v])=(-1)^{(|\d|+|x|)|y|}[y,\d(x,v)]-(-1)^{|v|(|\d|+|x|+|y|)}[v,\d(x,y)]=[y,\d(x,v)].
\eeqn
It implies that $ \d(x,\cdot)|_{\Fg_{\o{1}}}$ is a $\Fg_0$-module homomorphism from $\Fg_{\o{1}}$ to $\Fg$. Let $v\in\Fg_{\o{1}}$. Then for any $x,y\in\Fg_0$, one has
\beqn
\d([x,y],v)=(-1)^{|\d||x|}[x,\d(y,v)]-(-1)^{|y|(|\d|+|x|)}[y,\d(x,v)]=[x,\d(y,v)]-[y,\d(x,v)].
\eeqn
It implies that $\d(\cdot,v)|_{\Fg_0}$ is a derivation from $\Fg_0$ to $\Fg$. By Proposition \ref{P4.3}, $\d|_{\Fg_0\times \Fg_{\o{1}}}=0$, i.e.,
 \beq
 \d(x,v)=0,~\forall x\in\Fg_0, v\in\Fg_{\o{1}}.
 \eeq
 
 Let $v\in\Fg_{\o{1}}$. For any $x\in\Fg_0$ and $w\in\Fg_{\o{1}}$, we have
 \begin{align*}
  \d([x,w],v)&=(-1)^{|\d||x|}[x,\d(w,v)]-(-1)^{|v|(|\d|+|x|)}[w,\d(x,v)]=[x,\d(w,v)],\\
  \d(v,[x,w])&=(-1)^{|v||[x,w]|}\d([x,w],v)=-[x,\d(w,v)]=-[x,(-1)^{|w||v|}\d(v,w)]\\
                &=[x,\d(v,w)].
 \end{align*}
Hence, $\d(\cdot,v)|_{\Fg_{\o{1}}}$ and $\d( v,\cdot)|_{\Fg_{\o{1}}}$ are both $\Fg_0$-module homomorphism from $\Fg_{\o{1}}$ to $\Fg$. 

\medskip

By Weyl's theorem, $\Fg_{\o{1}}$ can be written as a direct sum of irreducible $\Fg_0$-submodules, say
\beqn
\Fg_{\o{1}}=V_1\oplus V_2\oplus\cdots\oplus V_m.
\eeqn
We claim that there exist at most one $i$ such that $\dim V_i=1$. Otherwise, there exist  $i\neq j$ such that 
$\Fg_0V_i=\Fg_0V_j=0$. Then
\beqn
V_i\oplus V_j\subseteq Z_{\Fg_{\o{1}}}(\Fg_{0}),
\eeqn
which forces $\dim Z_{\Fg_{\o{1}}}(\Fg_{0})\geq 2$, a contradiction. By Proposition \ref{P4.3}, 
\beqn
\d(V_i,V_j)=0
\eeqn
if $\dim V_i>1$ or $\dim V_j>1$.
Assume that $v,w\in V_i$ with $\dim V_i=1$. Then $v=aw$ for some $a\in\CC$. Since 
\beqn
\d(w,w)=(-1)^{|w||w|}\d(w,w)=-\d(w,w), 
\eeqn
we have $\d(w,w)=0.$
Hence,
\beqn
\d(v,w)=a\d(w,w)=0,
\eeqn
We obtain that
\beqn
\d(v,w)=0,~\forall w,v\in\Fg_{\o{1}}.
\eeqn

\medskip

Let $c$ be a central element of $\Fg_{\o{0}}$. Then $c\in[\Fg_{\o{1}},\Fg_{\o{1}}]$. Hence, we must have
\beq
\d(c,\Fg_{\o{1}})=0.
\eeq
It also implies that
\beq
\d(c_1,c_2)=0
\eeq
for any central elements $c_1,c_2$. We complete the proof.
\end{proof}

Since any classical Lie superalgebra satisfies the condition of above theorem. We can obtain the following corollary, 
which has been obtained by concrete, but a little complicated, computations in \cite{Y18}. 
\begin{cor}
All the super-symmetric biderivations of a classical simple Lie superalgebra are trival.
\end{cor}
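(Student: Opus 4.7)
The plan is to deduce this corollary as a direct application of the preceding theorem, so the real task is to verify its three hypotheses for every classical simple Lie superalgebra $\Fg=\Fg_{\o{0}}\oplus\Fg_{\o{1}}$: (i) $\Fg$ is perfect, (ii) $\Fg_{\o{0}}$ is reductive, and (iii) $\dim Z_{\Fg_{\o{1}}}(\Fg_0)\leq 1$, where $\Fg_0=[\Fg_{\o{0}},\Fg_{\o{0}}]$. Items (i) and (ii) are immediate: simplicity of $\Fg$ gives perfection, and reductivity of $\Fg_{\o{0}}$ is built into Kac's definition of a classical Lie superalgebra (equivalently, $\Fg_{\o{1}}$ is a completely reducible $\Fg_{\o{0}}$-module). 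Thus the entire content of the corollary is item (iii).

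For (iii) I would invoke Kac's classification of finite-dimensional complex classical simple Lie superalgebras, which partitions them into the basic series $A(m,n)$, $B(m,n)$, $C(n)$, $D(m,n)$, the three exceptions $F(4)$, $G(3)$, $D(2,1;\a)$, and the strange series $P(n)$, $Q(n)$. In each case one inspects the well-known $\Fg_0$-module structure of $\Fg_{\o{1}}$: in Type II algebras ($B$, $D$, $F(4)$, $G(3)$, $D(2,1;\a)$) the odd part $\Fg_{\o{1}}$ is a single irreducible nontrivial $\Fg_0$-module (e.g.\ a spin or other fundamental representation), while in Type I algebras and the strange series one has $\Fg_{\o{1}}=V^{+}\oplus V^{-}$ with each summand a nontrivial tensor-type representation of $\Fg_0$ (for instance $V^{\pm}$ are $\CC^{m+1}\otimes(\CC^{n+1})^{*}$ and its dual for $A(m,n)$, or $S^{2}\CC^{n+1}$ and $\Lambda^{2}(\CC^{n+1})^{*}$ for $P(n)$, and $\Fg_{\o{1}}$ is the adjoint of $\Fg_0$ for $Q(n)$). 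Since no irreducible $\Fg_0$-constituent of $\Fg_{\o{1}}$ is trivial in any of these cases, one concludes $Z_{\Fg_{\o{1}}}(\Fg_0)=0$, which certainly satisfies the bound in (iii).

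The main obstacle is therefore organizational rather than conceptual: a case-by-case verification against Kac's tables. I would package this as a short lemma asserting $Z_{\Fg_{\o{1}}}(\Fg_0)=0$ for every classical simple Lie superalgebra, treat one representative of each flavor in detail (say $A(m,n)$ for Type I, $D(2,1;\a)$ for Type II, and $Q(n)$ for the strange series) and indicate that the remaining cases follow by the same inspection of standard highest-weight data. Once the lemma is in hand, the preceding theorem applies verbatim and yields $\d=0$, completing the proof.
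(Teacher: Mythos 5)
Your proposal is correct and follows exactly the paper's route: the paper simply asserts that every classical simple Lie superalgebra satisfies the hypotheses of the preceding theorem, and your argument supplies the verification (perfectness from simplicity, reductivity of $\Fg_{\o{0}}$ from Kac's definition, and $Z_{\Fg_{\o{1}}}(\Fg_0)=0$ by inspecting the $\Fg_0$-module structure of $\Fg_{\o{1}}$ in each family) that the paper leaves implicit. No substantive difference in approach.
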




\end{document}